\documentclass[12pt]{amsart}
\usepackage{amsmath,amsfonts,amssymb}
\usepackage[all,cmtip]{xy}           
\usepackage{bm,bbm,bbding,txfonts,amscd}
\usepackage[all]{xy}
\usepackage[shortlabels]{enumitem}
\usepackage{ifpdf}
\ifpdf
\usepackage[colorlinks,final,backref=page,hyperindex]{hyperref}
\else
\usepackage[colorlinks,final,backref=page,hyperindex,hypertex]{hyperref}
\fi
\usepackage{tikz}
\usetikzlibrary{positioning}
\usepackage[active]{srcltx}
\usepackage{caption}

\makeatletter

\newtheorem{thm}{Theorem}[section]
\newtheorem{lem}[thm]{Lemma}
\newtheorem{cor}[thm]{Corollary}
\newtheorem{pro}[thm]{Proposition}
\newtheorem{ex}[thm]{Example}
\newtheorem{rmk}[thm]{Remark}
\newtheorem{defi}[thm]{Definition}

\newcommand{\be }{\begin{equation}}
\newcommand{\g}{\mathfrak g}

\newcommand{\huaB}{\mathcal{B}}
\newcommand{\dM}{\mathrm{d}}

\newcommand{\Id}{{\rm{Id}}}

\newcommand{\id}{\mathrm{id}}
\newcommand{\Der}{\mathrm{Der}}
\newcommand{\Inn}{\mathrm{Inn}}
\newcommand{\Ad}{\mathrm{Ad}}
\newcommand{\Aut}{\mathrm{Aut}}
\newcommand{\gl}{\mathfrak {gl}}
\newcommand{\ad}{\mathrm{ad}}
\newcommand{\Imm}{\mathrm{Im}}
\newcommand{\K}{\mathbb{K}}
\newcommand{\rb}{Rota-Baxter\,}
\newcommand{\rk}{\triangleright}

\begin{document}

\title{Inner post-Lie algebras and inner post-groups}

\author{Vsevolod Gubarev}
\address{Sobolev Institute of Mathematics Acad. Koptyug ave. 4, 630090 Novosibirsk, Russia}
\email{wsewolod89@gmail.com}

\author{Yue Li}
\address{Department of Mathematics, Jilin University, Changchun 130012, Jilin, China}
\email{liyue25@mails.jlu.edu.cn}

\author{Yunhe Sheng}
\address{Department of Mathematics, Jilin University, Changchun 130012, Jilin, China}
\email{shengyh@jlu.edu.cn}

\author{You Wang}
\address{Department of Mathematics, Jilin University, Changchun 130012, Jilin, China}
\email{wangyou20@mails.jlu.edu.cn}

\begin{abstract}
In this paper, using extension theory and cohomological approach we introduce the notion of the obstruction class for an inner post-Lie algebra being induced by a Rota-Baxter operator, and show that an inner post-Lie algebra is induced by a Rota-Baxter operator if and only if the obstruction class is trivial. Similarly, we introduce the notion of the obstruction class for an inner post-group being induced by a Rota-Baxter operator, and prove a parallel result. Finally, we give some applications of inner post-Lie algebras and inner post-groups.

\end{abstract}

\renewcommand{\thefootnote}{}
\footnotetext{2020 Mathematics Subject Classification.
17B38, 
17B56, 
}

\keywords{Rota-Baxter operators, post-Lie algebras, post-groups, cohomology, central extension}

\maketitle

\tableofcontents

\allowdisplaybreaks

\section{Introduction}

Post-Lie algebras were introduced by  Vallette in 2007~\cite{Vallette2007}
as algebras governing the Koszul dual operad to the operad of so-called commutative trialgebras.
A post-Lie algebra $(L,[\cdot,\cdot],\triangleright)$ is a Lie algebra~$(\g,[\cdot,\cdot])$ endowed with an additional bilinear operation~$\triangleright$ satisfying two identities on~$\g$.
In 2012, Burde, Dekimpe and Vercammen rediscovered post-Lie algebras while studying the analogue of the Milnor's question formulated for nil-affine transformations~\cite{Burde-2}. In 2013, Munthe-Kaas and Lundervold found that post-Lie algebras play important roles in numerical integration on  manifolds \cite{Munthe-Kaas-Lundervold}.  Meanwhile, it was found that post-Lie algebras also have important applications in regularity structures in stochastic analysis \cite{BK}. Due to these applications, post-Lie algebras are widely studied recently.

Post-Lie algebras are closely related to Rota-Baxter Lie algebras. 
Rota-Baxter operators on Lie algebras emerged in Semenov-Tian-Shansky's works as the operator form of the classical Yang-Baxter equation \cite{STS}.
In 2010, Bai, Guo and Ni showed how one can get a post-Lie algebra by defining a~new operation $x\triangleright y = [R(x),y]$ on a given Lie algebra endowed with a~Rota-Baxter operator~$R$ of weight~1 ~\cite{BGN}.
In~\cite{GK13}, Kolesnikov and the first author proved that any post-Lie algebra~$\g$ injectively embeds into Rota-Baxter Lie algebra defined on the space~$\g\oplus \g$.
In~\cite{UniEnv}, the first author constructed the universal enveloping Rota-Baxter Lie algebra of a given post-Lie algebra and showed that the analogue of the Poincar\'{e}-Birkhoff-Witt theorem holds.
In~\cite{BGN}, it was proved that any post-Lie algebra structure defined on a complete Lie algebra is induced via a~Rota-Baxter operator.
However, in general, not any post-Lie algebra originates from a~suitable Rota-Baxter operator as well as not every Lie algebra~$L$ is the commutator algebra of some associative product defined on the space~$L$.

To classify post-Lie algebras, the notion of inner post-Lie algebras were defined in~\cite{BM} for commutative post-Lie algebras and in~\cite{Burde2019} in general.
Originally, a post-Lie algebra $(L,[\cdot,\cdot],\triangleright)$ is called {\em inner} if there exists a linear operator $\varphi$ on $L$ such that $x\triangleright y = [\varphi(x),y]$ for all $x,y \in L$. The first  purpose of this paper is to clarify when an inner post-Lie algebra is induced by a~Rota-Baxter operator using extension theory and cohomological approach. More precisely, we associate a central extension, therefore a cohomological class after choosing a splitting, to an inner post-Lie algebra. Meanwhile, we characterize the linear map $\varphi$ in the above inner post-Lie algebra as a splitting of the central extension. The cohomological class obtained above is defined to be the {\em obstruction class} of the inner post-Lie algebra being induced by a Rota-Baxter operator. 
We prove that a~post-Lie algebra is induced by a~Rota-Baxter operator if and only if the obstruction class is trivial (Theorem~\ref{kap-RB-operator}).

All the main notions mentioned above have a group analogue. Rota-Baxter operators on groups were defined in 2021 by Guo, Lang and the third author in~\cite{GLS}. The differentiation of a~Rota-Baxter operator on a Lie group gives rise to a Rota-Baxter operator on the corresponding Lie algebra. Later in 2024, the notion of post-groups was introduced in \cite{BGST}, and the differentiation of a post-Lie group gives rise to a post-Lie algebra. A Rota-Baxter operator naturally gives rise to a post-group. All these structures are closely related to skew left braces, which were introduced in 2017 by Guarneri and Vendramin~\cite{GV2017} as a tool to generate set-theoretical solutions to the quantum Yang-Baxter equation. A skew left brace is a non-empty set~$G$ equipped with two group operations~$\cdot$ and $\circ$ such that a compatibility is satisfied.
In 2022, Bardakov and the first author showed~\cite{RBBraces} that given a Rota-Baxter group $(G,\cdot,B)$, one can obtain a skew left brace on $G$ as follows: $g\circ h = gB(g)hB(g)^{-1}$. See \cite{RS} for recent developments.
In 2023, Caranti and Stefanello studied the following question~\cite{Caranti}:
If a skew left brace $(G,\cdot,\circ)$ satisfies the condition
$g\circ h = g\varphi(g)h\varphi(g)^{-1}$ for some $\varphi\colon G\to G$,
under what conditions this brace is induced by some Rota-Baxter operator defined on the group $(G,\cdot)$? They found a~convenient criterion in terms of the trivialization of certain 2-cocycle   defined via $\varphi$~\cite[Theorem\,3.2]{Caranti}.

The second purpose of this paper is to clarify when an inner post-group is induced by a~Rota-Baxter operator using extension theory and cohomological approach similar as the case of inner post-Lie algebras. We introduce the notion of the obstruction class of an inner post-group being induced by a Rota-Baxter operator, and show that an inner post-group is induced by a~Rota-Baxter operator if and only if the obstruction class is trivial.
Although these results are intrinsically equivalent to those obtained in~\cite{Caranti}, we place more emphasis here on the use of extension theory and cohomological methods.
Moreover, we apply our approach to connect results about inner post-Lie algebras and inner post-groups on the level of the correspondence between Lie groups and Lie algebras.
We show that the differentiation of  an inner post-Lie group whose obstruction class is trivial  is an inner post-Lie algebra whose obstruction class is also trivial (Proposition~\ref{pro:obs-post-group}).

Finally, we consider applications of inner post-Lie algebras and inner post-Lie groups.
In 2022,   Burde,   Dekimpe  and   Monadjem proved~\cite{Burde2022} that given a finite-dimensional post-Lie algebra $(\g,[\cdot,\cdot]_\g,\triangleright)$ over $\mathbb{C}$ such that $\g_\triangleright$ is semisimple we have that $\g$ is also semisimple and
$\g\cong \g_\triangleright$. This deep and significant result involves several non-trivial steps. One of them is a~consequence of the results of  Onishchik~\cite{Onishchik62,Onishchik69} concerning decompositions of semisimple Lie algebras. Involving the latter, we provide an alternative proof of the Burde-Dekimpe-Monadjem theorem in the case when a post-Lie algebra is inner (Theorem~\ref{thm-2}).
Finally,  under certain mild conditions, we show that the Lie groups of the sequence of the sub-adjacent Lie groups $(G_n,\circ_n)$ associated to an inner post-group whose obstruction class is trivial, are semisimple for all $n\geq 0$.
(Theorem~\ref{thm-3}).

\vspace{2mm}
\noindent
{\bf Acknowledgements}.
This research is supported by NSF of Jilin Province (20260101013JJ), NSFC (12471060, W2412041).
The first author was supported by the Russian Science Foundation (project 25-41-00005).

\section{A cohomological characterization of inner post-Lie algebras}

In this section, using the theory of central extensions of Lie algebras, we introduce the obstruction class of an inner post-Lie algebra being induced from a Rota-Baxter operator. 
We show that an inner post-Lie algebra is induced from a Rota-Baxter operator if and only if the  obstruction class is trivial.

\begin{defi}
A \textbf{post-Lie algebra} $(\g,[\cdot,\cdot]_\g,\triangleright)$ consists of a Lie algebra $(\g,[\cdot,\cdot]_\g)$ and a binary product $\triangleright:\g\otimes\g\to \g$ such that
\begin{eqnarray}
\label{Post-1} x\triangleright [y,z]_\g&=&[x\triangleright y,z]_\g+[y,x\triangleright z]_\g,\\
\label{Post-2} ([x,y]_\g+x\triangleright y-y\triangleright x) \triangleright z&=&x\triangleright(y\triangleright z)-y\triangleright(x\triangleright z),\quad \forall x, y, z\in \g.
\end{eqnarray}
\end{defi}

\begin{rmk}
Let $(\g,[\cdot,\cdot]_\g,\triangleright)$ be a post-Lie algebra. If the Lie bracket $[\cdot,\cdot]_\g$ vanishes, then $(\g,\triangleright)$ becomes a pre-Lie algebra. Thus, a post-Lie algebra can be viewed as a nonabelian generalization of a pre-Lie algebra \cite{Bai,Burde-1,Ma}.
\end{rmk}

The following characterization of post-Lie algebras is given in \cite{BGN}.

\begin{lem}{\rm(\cite{BGN})}\label{post-Lie-sub}
Any post-Lie algebra $(\g,[\cdot,\cdot]_\g,\triangleright)$ gives rise to a new Lie algebra $(\g,[\cdot,\cdot]_{\triangleright})$
given by
$$
[x,y]_{\triangleright} := x\triangleright y-y\triangleright x+[x,y]_\g,\quad\forall x,y\in\g.
$$
which is called the \textbf{sub-adjacent Lie algebra} of the post-Lie algebra $(\g,[\cdot,\cdot]_\g,\triangleright)$ and denoted by~$\g_\triangleright$. Moreover, the left multiplication  $L_{\triangleright}:\g\to \Der(\g)$ defined by $L_{\triangleright}(x)(y)=x\triangleright y$ is an action of the sub-adjacent Lie algebra $\g_\triangleright$ on the Lie algebra $(\g,[\cdot,\cdot]_\g)$.
\end{lem}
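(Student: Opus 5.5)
We prove the three assertions of Lemma~\ref{post-Lie-sub} in order by direct computation from \eqref{Post-1} and \eqref{Post-2}: that $L_\triangleright(x)$ is a derivation of $(\g,[\cdot,\cdot]_\g)$, that $L_\triangleright$ is a Lie algebra homomorphism from $\g_\triangleright$ to $\Der(\g)$, and that $[\cdot,\cdot]_\triangleright$ is a Lie bracket.

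\emph{Derivations.} Identity \eqref{Post-1} says exactly that $L_\triangleright(x)[y,z]_\g=[L_\triangleright(x)y,z]_\g+[y,L_\triangleright(x)z]_\g$, so each $L_\triangleright(x)$ lies in $\Der(\g)$.

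\emph{Homomorphism property.} Identity \eqref{Post-2} reads
\[
L_\triangleright([x,y]_\triangleright)=[L_\triangleright(x),L_\triangleright(y)],
\]
where the right-hand side is the commutator of linear maps. So once $[\cdot,\cdot]_\triangleright$ is known to be a Lie bracket, $L_\triangleright$ is a homomorphism $\g_\triangleright\to\Der(\g)$. This is the claimed action.

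\emph{Lie bracket.} Bilinearity and skew-symmetry of $[\cdot,\cdot]_\triangleright$ are immediate. The only real work, and the main obstacle, is the Jacobi identity. Expand
\[
[[x,y]_\triangleright,z]_\triangleright=[x,y]_\triangleright\triangleright z-z\triangleright[x,y]_\triangleright+[[x,y]_\triangleright,z]_\g
\]
and sum cyclically over $x,y,z$. Group the resulting terms into three kinds.
\begin{enumerate}[(i)]
\item Terms $[x,y]_\triangleright\triangleright z$. By \eqref{Post-2} these become $x\triangleright(y\triangleright z)-y\triangleright(x\triangleright z)$.
\item Terms $-z\triangleright[x,y]_\triangleright$. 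These expand to $-z\triangleright(x\triangleright y)+z\triangleright(y\triangleright x)-z\triangleright[x,y]_\g$.
\item Terms $[[x,y]_\triangleright,z]_\g$. These expand to $[x\triangleright y,z]_\g-[y\triangleright x,z]_\g+[[x,y]_\g,z]_\g$.
\end{enumerate}
Under cyclic summation the double-$\triangleright$ terms from (i) cancel against those from (ii). The terms $[[x,y]_\g,z]_\g$ vanish by the Jacobi identity of $[\cdot,\cdot]_\g$. The remaining mixed terms are $-z\triangleright[x,y]_\g+[x\triangleright y,z]_\g-[y\triangleright x,z]_\g$, summed cyclically. Apply \eqref{Post-1} to rewrite $z\triangleright[x,y]_\g=[z\triangleright x,y]_\g+[x,z\triangleright y]_\g$. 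After reindexing the cyclic sum, these mixed terms cancel pairwise.

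The bookkeeping of the cyclic sum in this last step is routine but must be done carefully. Once the Jacobi identity holds, the action statement follows from the homomorphism identity above.
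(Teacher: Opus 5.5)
Your proof is correct. Each step checks out: $L_\triangleright(x)\in\Der(\g)$ follows directly from \eqref{Post-1}, and the identity $L_\triangleright([x,y]_\triangleright)=[L_\triangleright(x),L_\triangleright(y)]$ is \eqref{Post-2} read as an identity of operators. In your cyclic-sum verification of the Jacobi identity, the double-$\triangleright$ terms cancel and the mixed terms cancel in the pairs you describe. The paper gives no proof of its own and cites the lemma from Bai, Guo and Ni. Your argument is the standard direct verification of this fact.
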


Denote by $Z(\g)$ the center of $(\g,[\cdot,\cdot]_\g)$.
\begin{lem}
Let $ (\g,[\cdot,\cdot]_\g,\triangleright)$ be a post-Lie algebra. Then $ L_\rk$ induces a representation of $\g_\rk$ on~$Z(\g)$.
\end{lem}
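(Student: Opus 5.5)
We need to show two things: that $L_\rk(x)$ maps $Z(\g)$ into itself for every $x\in\g$, and that the restricted map $x\mapsto L_\rk(x)|_{Z(\g)}$ is a Lie algebra homomorphism from $\g_\rk$ to $\gl(Z(\g))$. Lemma~\ref{post-Lie-sub} gives most of the structure, so the only real content is the invariance of the center.

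For invariance, take $z\in Z(\g)$ and any $x,y\in\g$. Apply \eqref{Post-1} with the pair $(y,z)$:
\[
x\rk[y,z]_\g=[x\rk y,z]_\g+[y,x\rk z]_\g .
\]
The left side is $x\rk 0=0$ because $[y,z]_\g=0$ and $\rk$ is bilinear. The first term on the right is $0$ because $z$ is central. Hence $[y,x\rk z]_\g=0$ for all $y\in\g$, so $x\rk z\in Z(\g)$. Equivalently, $L_\rk(x)$ is a derivation of $(\g,[\cdot,\cdot]_\g)$, and derivations always preserve the center.

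For the representation property, Lemma~\ref{post-Lie-sub} states that $L_\rk:\g_\rk\to\Der(\g)\subset\gl(\g)$ is a Lie algebra homomorphism. One can also read this off directly from \eqref{Post-2}:
\[
L_\rk([x,y]_\rk)=[L_\rk(x),L_\rk(y)].
\]
Since each $L_\rk(x)$ preserves $Z(\g)$, restricting both sides of this identity to $Z(\g)$ preserves it. Restriction to an invariant subspace is compatible with composition, so the commutator of the restrictions equals the restriction of the commutator. Therefore $x\mapsto L_\rk(x)|_{Z(\g)}$ is a representation of $\g_\rk$ on $Z(\g)$.

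No step is genuinely hard. The one point to check carefully is the invariance step, namely that the two vanishing terms in \eqref{Post-1} are exactly what is needed to conclude $x\rk z\in Z(\g)$.
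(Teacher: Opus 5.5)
Your proposal is correct and follows the paper's proof: you apply \eqref{Post-1} with a central element to get $x\rk z\in Z(\g)$, and then restrict the homomorphism $L_\rk\colon\g_\rk\to\Der(\g)$ from Lemma~\ref{post-Lie-sub}. The only difference is that you spell out the restriction-to-an-invariant-subspace step, which the paper leaves implicit.
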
	
\begin{proof}
By Eq. \eqref{Post-1}, for any $y\in Z(\g), x, z\in\g$, we have
$$
[{L_\rk}(x)(y),z]_\g=x\rk [y,z]_\g-[y,x\rk z]_\g=0,
$$
which implies that ${L_\rk}(x)(y)\in Z(\g)$. Thus, $ L_\rk$ induces a representation of $\g_\rk$ on $Z(\g)$.
\end{proof}

Note that for a post-Lie algebra $(\g,[\cdot,\cdot]_\g,\triangleright)$, the left multiplication  $L_{\triangleright}(x)$ takes values in $\Der(\g)$ for all $x\in \g$.
Now we introduce the notion of inner post-Lie algebras.
Denote by $\Inn(\g)$ the inner derivation Lie algebra of a Lie algebra $(\g,[\cdot,\cdot]_\g)$.

\begin{defi}{\rm (\cite{Burde2019,BM})}
A post-Lie algebra $(\g,[\cdot,\cdot]_\g,\triangleright)$ is called \textbf{inner},
if $\Imm(L_{\triangleright}) \subseteq \Inn(\g).$
\end{defi}

A class of inner post-Lie algebras is given by Rota-Baxter operators on Lie algebras.

\begin{defi}
Let $(\g,[\cdot,\cdot]_\g)$ be a Lie algebra. A linear operator $R:\g \to \g$ is called a \textbf{Rota-Baxter operator} (of weight $1$) if it satisfies the relation:
\begin{eqnarray}\label{RBO-Lie-alg}
[R(x),R(y)]_\g=R([R(x),y]_\g+[x,R(y)]_\g+[x,y]_{\g}),\quad \forall x,y\in \g.
\end{eqnarray}
The triple $(\g,[\cdot,\cdot]_\g,R)$ is called a \textbf{Rota-Baxter Lie algebra}.
\end{defi}

Rota-Baxter Lie algebras naturally induce post-Lie algebras, which are inner.

\begin{lem}\label{lem-RB-innPL}{\rm(\cite{BGN})}
Let $(\g,[\cdot,\cdot]_\g,R)$ be a Rota-Baxter Lie algebra. Then $(\g,[\cdot,\cdot]_\g,\triangleright_{R})$ is a post-Lie algebra, where the multiplication $\triangleright_{R}$ is given by
\begin{eqnarray}\label{RBO-post-Lie-alg}
x \triangleright_{R} y:= [R(x),y]_{\g}, \quad \forall x,y\in \g.
\end{eqnarray}
Moreover, this post-Lie algebra is inner.
\end{lem}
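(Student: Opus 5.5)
We verify the two post-Lie axioms \eqref{Post-1} and \eqref{Post-2} for $x\triangleright_R y=[R(x),y]_\g$ directly, and then read off innerness from the form of the product.

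\textbf{Axiom \eqref{Post-1}.} By definition, $L_{\triangleright_R}(x)=\ad_{R(x)}$. The Jacobi identity says precisely that $\ad_{R(x)}$ is a derivation of $(\g,[\cdot,\cdot]_\g)$:
$$[R(x),[y,z]_\g]_\g=[[R(x),y]_\g,z]_\g+[y,[R(x),z]_\g]_\g .$$
This is exactly \eqref{Post-1}.

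\textbf{Axiom \eqref{Post-2}.} First compute the element acting on the left:
$$[x,y]_\g+x\triangleright_R y-y\triangleright_R x=[x,y]_\g+[R(x),y]_\g+[x,R(y)]_\g .$$
Applying $R$ and using the Rota-Baxter identity \eqref{RBO-Lie-alg}, the left-hand side of \eqref{Post-2} becomes
$$\big[R([R(x),y]_\g+[x,R(y)]_\g+[x,y]_\g),z\big]_\g=\big[[R(x),R(y)]_\g,z\big]_\g .$$
The right-hand side of \eqref{Post-2} is
$$[R(x),[R(y),z]_\g]_\g-[R(y),[R(x),z]_\g]_\g .$$
By Jacobi this equals $[[R(x),R(y)]_\g,z]_\g$, since $\ad$ is a Lie algebra homomorphism, i.e. $\ad_{[a,b]}=[\ad_a,\ad_b]$. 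The two sides therefore agree, and \eqref{Post-2} holds.

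\textbf{Innerness.} Since $L_{\triangleright_R}(x)=\ad_{R(x)}\in\Inn(\g)$ for every $x\in\g$, we have $\Imm(L_{\triangleright_R})\subseteq\Inn(\g)$, so the post-Lie algebra is inner.

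There is no real obstacle here. The only step needing care is recognizing that the combination appearing in \eqref{Post-2} is exactly the argument of $R$ in \eqref{RBO-Lie-alg}; once that is seen, the rest is the Jacobi identity.
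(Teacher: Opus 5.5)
Your proof is correct. It is the standard direct verification: \eqref{Post-1} is the Jacobi identity for $\ad_{R(x)}$, \eqref{Post-2} follows from the Rota--Baxter identity on the left-hand side and Jacobi on the right-hand side, and innerness holds because $L_{\triangleright_R}(x)=\ad_{R(x)}\in\Inn(\g)$. The paper itself gives no proof of this lemma and only cites Bai--Guo--Ni for it, so there is no separate argument to compare yours against.
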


\begin{cor}
Let $(\g,[\cdot,\cdot]_\g,R)$ be a Rota-Baxter Lie algebra, and $(\g,[\cdot,\cdot]_\g,\triangleright_{R})$ the induced post-Lie algebra. Then the induced representation $ L_\rk$  of $\g_\rk$ on $Z(\g)$ is trivial.
\end{cor}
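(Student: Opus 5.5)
The proof is a direct computation from the definition of $\triangleright_R$ in Eq.~\eqref{RBO-post-Lie-alg}. By the preceding lemma, $L_\rk$ restricts to a representation of $\g_\rk$ on $Z(\g)$. So it suffices to show that $L_\rk(x)(z)=0$ for every $x\in\g$ and every $z\in Z(\g)$.

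Fix $x\in\g$ and $z\in Z(\g)$. By Lemma~\ref{lem-RB-innPL}, the post-Lie product is $x\triangleright_R z=[R(x),z]_\g$. Since $z$ lies in the center of $(\g,[\cdot,\cdot]_\g)$, it commutes with every element of $\g$, in particular with $R(x)$, so $[R(x),z]_\g=0$. Hence $L_\rk(x)|_{Z(\g)}=0$ for all $x$, and the induced representation is trivial.

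There is no real obstacle: the only point to check is that the representation of $\g_\rk$ on $Z(\g)$ is the restriction of $L_{\triangleright_R}$, which is how it was defined in the lemma above. Neither the Rota-Baxter identity \eqref{RBO-Lie-alg} nor the post-Lie axioms are needed beyond the fact that $\triangleright_R$ is an inner product of the form $[R(x),\cdot]_\g$. The same argument therefore shows more generally that any post-Lie algebra with $x\triangleright y=[\varphi(x),y]_\g$ for some linear map $\varphi$ induces the trivial representation on $Z(\g)$.
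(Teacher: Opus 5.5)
Your proof is correct and is essentially the paper's own one-line argument: $L_\rk(x)(y)=[R(x),y]_\g=0$ for all $x\in\g$ and $y\in Z(\g)$, because $y$ is central. Your closing remark that the same holds for any inner post-Lie algebra is also right, and it matches the paper's later observation that the representation $\rho_s$ induced by a section is trivial.
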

\begin{proof}
It follows from $L_\rk(x)(y)=[R(x),y]_\g=0$ for all $x\in\g$ and $y\in Z(\g)$.
\end{proof}

\begin{pro}
Two Rota-Baxter operators $R_1, R_2$ yield the same post-Lie algebra $(\g,[\cdot,\cdot]_\g,\triangleright)$ by Eq. \eqref{RBO-post-Lie-alg} if and only if there exists a $1$-cocycle $t \colon \g_\triangleright \to Z(\g)$ on the Lie algebra $\g_\triangleright$ with the coefficients in the trivial representation $Z(\g)$ such that $$R_2(x) = R_1(x) + t(x),\quad \forall x \in \g.$$
\end{pro}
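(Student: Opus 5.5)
Both directions reduce to comparing the two induced products and then unwinding the Rota-Baxter identity.

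For the forward direction, suppose $R_1,R_2$ are Rota-Baxter operators and $[R_1(x),y]_\g=[R_2(x),y]_\g$ for all $x,y$. Set $t:=R_2-R_1$. Then $[t(x),y]_\g=0$ for all $y$, so $t$ takes values in $Z(\g)$. Since both operators induce the same $\triangleright$, the sub-adjacent bracket is
$$
[x,y]_\triangleright=[R_1(x),y]_\g+[x,R_1(y)]_\g+[x,y]_\g=[R_2(x),y]_\g+[x,R_2(y)]_\g+[x,y]_\g,
$$
and the Rota-Baxter identity \eqref{RBO-Lie-alg} reads $[R_i(x),R_i(y)]_\g=R_i([x,y]_\triangleright)$ for $i=1,2$. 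Expanding $R_2=R_1+t$ and using that $t(x),t(y)$ are central gives $[R_2(x),R_2(y)]_\g=[R_1(x),R_1(y)]_\g$. Subtracting the two identities therefore yields $t([x,y]_\triangleright)=0$.

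By the Corollary above, the representation of $\g_\triangleright$ on $Z(\g)$ induced by $L_\triangleright$ is trivial. For the trivial representation, the Chevalley--Eilenberg differential of a $1$-cochain is $d t(x,y)=-t([x,y]_\triangleright)$. The condition $t([x,y]_\triangleright)=0$ is therefore exactly the $1$-cocycle condition, which proves the forward direction.

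For the converse, let $R_1$ be a Rota-Baxter operator inducing $\triangleright$, and let $t\colon\g\to Z(\g)$ satisfy $t([x,y]_\triangleright)=0$. Put $R_2=R_1+t$. Since $t(x)$ is central, $[R_2(x),y]_\g=[R_1(x),y]_\g$, so $R_2$ induces the same $\triangleright$. In particular, $R_1$ and $R_2$ give the same expression $[R_i(x),y]_\g+[x,R_i(y)]_\g+[x,y]_\g=[x,y]_\triangleright$.

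It remains to check that $R_2$ is Rota-Baxter:
$$
R_2([x,y]_\triangleright)=R_1([x,y]_\triangleright)+t([x,y]_\triangleright)=[R_1(x),R_1(y)]_\g=[R_2(x),R_2(y)]_\g,
$$
where the second equality uses the cocycle condition and the third uses centrality of $t$.

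The only point needing care is to state the cocycle condition with the correct (trivial) module structure. Here we rely on the Corollary, which says $L_\triangleright$ acts trivially on $Z(\g)$ in the Rota-Baxter case. The remaining steps are direct substitutions.
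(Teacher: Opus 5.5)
Your proof is correct and follows the paper's argument: you set $t=R_2-R_1$, deduce centrality from the equality of the induced products, and subtract the two Rota--Baxter identities to get $t([x,y]_\triangleright)=0$, which is the $1$-cocycle condition for the trivial module; the converse is the same one-line centrality check. Your extra verification that $R_1+t$ is automatically a Rota--Baxter operator is a harmless strengthening, since the statement and the paper already assume that both $R_1$ and $R_2$ are Rota--Baxter operators.
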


\begin{proof}
On the one hand, suppose that two Rota-Baxter operators $R_1, R_2$ yield the same post-Lie algebra $(\g,[\cdot,\cdot]_\g,\triangleright)$ by Eq. \eqref{RBO-post-Lie-alg}. Then $x\triangleright y  =[R_1(x),y]_{\g} = [R_2(x),y]_{\g}$ for all $x,y\in \g$. Define $t(x):= R_2(x)-R_1(x)$ for all $x \in \g$, then we have $[t(x),y]_{\g}=[R_2(x)-R_1(x),y]=0$, which implies that
$\Imm(t)\subseteq Z(\g)$.
Further, by Eq. \eqref{RBO-Lie-alg}, we have
\begin{eqnarray*}
t([x,y]_{\triangleright})
&=& R_2([x,y]_{\triangleright}) - R_1([x,y]_{\triangleright})\\
&=& [R_2(x),R_2(y)]_{\g} - [R_1(x),R_1(y)]_{\g} \\
&=& [R_1(x),t(y)]_\g +[t(x),R_1(y)]_\g +[t(x),t(y)]_\g\\
&=& 0,
\end{eqnarray*}
which implies that $t([x,y]_{\triangleright})=0=x\rk t(y)-y\rk t(x)$ for all $x,y\in \g$. Thus, $t$ is a $1$-cocycle on the Lie algebra $\g_\rk$ with the coefficients in the trivial representation $Z(\g)$.

On the other hand, suppose that there exists a $1$-cocycle $t \colon \g_\triangleright \to Z(\g)$ on the Lie algebra~$\g_\triangleright$ with the coefficients in the trivial representation $Z(\g)$ such that $R_2(x) = R_1(x) + t(x)$ for all $x \in \g$. Then $R_1$ and $R_2$ yield the same post-Lie algebra $(\g,[\cdot,\cdot]_\g,\triangleright)$, since $x\triangleright y =[R_1(x),y]_{\g} =[R_2(x)-t(x),y]_{\g}= [R_2(x),y]_{\g}$.
\end{proof}

A natural question arises:

\begin{center}
{\bf When is an inner post-Lie algebra induced by a Rota-Baxter operator?}
\end{center}

In the sequel, we use the cohomological approach to solve this problem.
Consider the adjoint representation $\ad:\g \to \Der(\g)$ as a homomorphism of Lie algebras. Since $\ker(\ad)=Z(\g)$, we have $\g/ Z(\g) \cong \Imm(\ad)=\Inn(\g)$ by the First Isomorphism Theorem in the case of Lie algebras. Then we have the following central extension of Lie algebras:
\[
\xymatrix{
0 \ar[r] & Z(\g)  \ar[r]^{i } & \g \ar[r]^{ p\quad\quad} & \g/ Z(\g) \cong \Inn(\g)\ar[r] & 0,
}\]
where $p$ is the canonical projection and $i$ is the inclusion. For simplicity, we will

Assume that $(\g,[\cdot,\cdot]_\g,\triangleright)$ is an inner post-Lie algebra. Then by pullback along $ L_{\triangleright}: \g_{\triangleright} \to \Inn(\g)$, we obtain the following commutative diagram, which consists of two central extensions of Lie algebras:
\begin{eqnarray}\label{diagram1}
\xymatrix{
0 \ar[r] & Z(\g)  \ar[d]_{\Id}   \ar[r]^{i_2} & \g^{(2)} \ar[d]_{p_2}  \ar[r]^{p_1} &\g_\triangleright      \ar[d]_{L_{\triangleright}}   \ar[r] &0\\
0 \ar[r] & Z(\g)    \ar[r]^{i} & \g   \ar[r]^{\ad} & \Inn(\g)           \ar[r] &0,
}
\end{eqnarray}
where $\g^{(2)}$ is a Lie subalgebra of $\g_{\triangleright}\oplus \g$, which is the direct product of two Lie algebras
$\g_\triangleright$ and $(\g,[\cdot,\cdot]_\g)$, given by
\begin{eqnarray}\label{g-2}
\g^{(2)}=\{(x,y)\in \g_\triangleright \oplus \g~|~L_{\triangleright}(x)= \ad_{y}\},
\end{eqnarray}
$i$ is the inclusion, $i_2$ is the inclusion to the second summand and $p_1,p_2$ are projections to the first summand and the second summand respectively. According to Eq. \eqref{g-2}, it is obvious that the right square commutes.

Now choose a section of the central extension of Lie algebras in the first line of the diagram~\eqref{diagram1}, which is a linear map $s:\g_\triangleright  \to \g^{(2)}$ such that $p_1\circ s=\Id_{\g_\triangleright}$. Obviously, $s$ is of the form $s(x)=(x,\phi(x))$ for a linear map $\phi:\g \to \g$. Since the right square commutes, we have
$$ \ad \circ p_2 \circ s(x)=L_{\triangleright} \circ p_1 \circ s(x)= L_{\triangleright}(x), $$
which implies $L_{\triangleright}(x)= \ad_{\phi(x)}$.

Define $\kappa:\wedge^2\g_\triangleright\to \g^{(2)}$ by
$$
\kappa(x,y)=[s(x),s(y)]^{(2)}-s([x,y]_\triangleright),\quad \forall x,y
\in \g_\triangleright.
$$
Then it is obvious that $p_1(\kappa(x,y))=0,$ which implies that $\kappa(x,y)\in i_2(Z(\g))$. Identifying $Z(\g)$ and $i_2(Z(\g))$, it follows that $\kappa:\wedge^2\g_\triangleright\to Z(\g)$ is given by
\begin{eqnarray}\label{kappa}
\kappa(x, y)=[\phi(x),\phi(y)]_{\g}-\phi([x, y]_{\triangleright}).
\end{eqnarray}

On the other hand, the induced representation $\rho_s:\g_\rk\to \gl(Z(\g))$ of the section $s$ is given by
$$
\rho_s(x)(y)=[(x,\phi(x)),(0,y)]^{(2)}=[\phi(x),y]_\g=0,\quad \forall x\in\g, y\in Z(\g),
$$
which is trivial.

\begin{pro}\label{kap-2-cocycle}
Let $(\g,[\cdot,\cdot]_\g,\triangleright)$ be an inner post-Lie algebra, $s$ and $\kappa$ defined as above.
Then $\kappa$ is a $2$-cocycle on the Lie algebra $\g_\triangleright$ with the coefficients in the trivial representation $Z(\g)$. Moreover, its cohomology class in $H^2(\g_\triangleright, Z(\g))$ does not depend on the choice of the section $s$.
\end{pro}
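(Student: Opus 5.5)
The plan is to run the standard argument for central extensions of Lie algebras directly on the first row of diagram~\eqref{diagram1}, using the explicit formula \eqref{kappa} and the fact, just checked above, that the induced representation $\rho_s$ is trivial.

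\textbf{Step 1: $\kappa$ is a $2$-cocycle.} For the trivial representation the coboundary of a $2$-cochain is
$$(\dM\kappa)(x,y,z)=-\kappa([x,y]_\triangleright,z)+\kappa([x,z]_\triangleright,y)-\kappa([y,z]_\triangleright,x),$$
up to the usual sign convention. So I must show that the cyclic sum $\sum_{\mathrm{cyc}}\kappa([x,y]_\triangleright,z)$ vanishes. I expand each term with \eqref{kappa}:
$$\kappa([x,y]_\triangleright,z)=[\phi([x,y]_\triangleright),\phi(z)]_\g-\phi([[x,y]_\triangleright,z]_\triangleright).$$
The cyclic sum of the second pieces vanishes by the Jacobi identity in $\g_\triangleright$ (Lemma~\ref{post-Lie-sub}). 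For the first pieces I substitute $\phi([x,y]_\triangleright)=[\phi(x),\phi(y)]_\g-\kappa(x,y)$. Since $\kappa(x,y)\in Z(\g)$, the bracket $[\kappa(x,y),\phi(z)]_\g$ is zero. What remains is $\sum_{\mathrm{cyc}}[[\phi(x),\phi(y)]_\g,\phi(z)]_\g$, which vanishes by the Jacobi identity in $\g$.

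A cleaner way to say the same thing: $\kappa$ measures the failure of $s$ to be a homomorphism into the Lie algebra $\g^{(2)}$. The Jacobi identity in $\g^{(2)}$, together with $\kappa$ being central, gives $\dM\kappa=0$. This is the general fact that every central extension yields a $2$-cocycle. The only point needing care is the centrality of $\kappa(x,y)$, and that was established just before the statement: $p_1\kappa=0$, so $\kappa(x,y)\in i_2(Z(\g))$.

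\textbf{Step 2: independence of the section.} Let $s'(x)=(x,\phi'(x))$ be another section. Then $p_1(s'-s)=0$, so $t:=\phi'-\phi$ takes values in $Z(\g)$. (Equivalently, $\ad_{\phi'(x)}=L_\triangleright(x)=\ad_{\phi(x)}$.) Now compute
$$\kappa'(x,y)=[\phi(x)+t(x),\phi(y)+t(y)]_\g-\phi([x,y]_\triangleright)-t([x,y]_\triangleright).$$
Because $t$ is central, the cross terms drop out, leaving
$$\kappa'(x,y)=\kappa(x,y)-t([x,y]_\triangleright).$$
With trivial coefficients, $-t([x,y]_\triangleright)=(\dM t)(x,y)$. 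Hence $\kappa'-\kappa=\dM t$, and the class in $H^2(\g_\triangleright,Z(\g))$ is unchanged.

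\textbf{Where the difficulty lies.} I do not expect a genuine obstacle. The only subtle points are the two uses of centrality, and keeping the sign convention of the Chevalley--Eilenberg differential consistent.
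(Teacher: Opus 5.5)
Your proposal is correct and follows the paper's own proof essentially line by line. For the cocycle condition you expand the cyclic sum, kill the $\phi$-terms with the Jacobi identity in $\g_\triangleright$, then substitute $\phi([x,y]_\triangleright)=[\phi(x),\phi(y)]_\g-\kappa(x,y)$ and use centrality of $\kappa$ together with the Jacobi identity in $\g$; for section-independence you show $t=\phi'-\phi$ takes values in $Z(\g)$, which gives $\kappa'=\kappa+\dM t$.
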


\begin{proof}
In fact, this conclusion follows from the general theory of central extensions of Lie algebras. To be self-contained, we roughly give the proof.

By the Jacobi identity and by Eq. \eqref{kappa}, for all $x,y,z\in\g$, we have
\begin{eqnarray*}
&&\kappa([x,y]_{\triangleright},z)+\kappa([y,z]_{\triangleright},x)+\kappa([z,x]_{\triangleright},y)\\
&=& [\phi([x,y]_{\triangleright}),\phi(z)]_{\g}-\phi([[x,y]_{\triangleright},z]_{\triangleright})
+[\phi([y,z]_{\triangleright}),\phi(x)]_{\g}-\phi([[y,z]_{\triangleright},x]_{\triangleright})\\
&&+[\phi([z,x]_{\triangleright}),\phi(y)]_{\g}-\phi([[z,x]_{\triangleright},y]_{\triangleright})\\
&=&  [\phi([x,y]_{\triangleright}),\phi(z)]_{\g}+[\phi([y,z]_{\triangleright}),\phi(x)]_{\g}
+[\phi([z,x]_{\triangleright}),\phi(y)]_{\g}\\
&=&[[\phi(x),\phi(y)]_{\g},\phi(z)]_{\g}
+[[\phi(y),\phi(z)]_{\g},\phi(x)]_{\g}+[[\phi(z),\phi(x)]_{\g},\phi(y)]_{\g}\\
&&-[\kappa(x,y),\phi(z)]_\g-[\kappa(y,z),\phi(x)]_\g-[\kappa(z,x),\phi(y)]_\g\\
&=&0,
\end{eqnarray*}
which implies that $\kappa$ is a $2$-cocycle on the Lie algebra $\g_\triangleright$ with the coefficients in the trivial representation $Z(\g)$.

Suppose that there exists another section $s':\g_\triangleright \to \g^{(2)}$ given by $s'(x)=(x,\phi'(x))$ for a linear map $\phi':\g \to \g$. Since the right square commutes in diagram \eqref{diagram1}, we have
$$ \ad_{\phi'(x)}=\ad \circ p_2 \circ s'(x)=L_{\triangleright} \circ p_1 \circ s'(x)= L_{\triangleright}(x),\quad \forall x\in \g. $$
Define $t=\phi'-\phi:\g \to \g$; then we have
$$[t(x),y]_{\g}=[\phi'(x),y]_{\g} - [\phi(x),y]_{\g}=x \triangleright y-x \triangleright y=0, \quad \forall x,y\in \g,$$
which implies that $\Imm (t)\subseteq Z(\g)$.

Next we define $\kappa' \colon \wedge^2\g_\triangleright \to Z(\g)$ by
\begin{eqnarray*}
\kappa'(x, y)=[\phi'(x),\phi'(y)]_{\g}-\phi'([x, y]_{\triangleright}),\quad \forall x,y\in \g_\rk.
\end{eqnarray*}
Then we have
\begin{eqnarray*}
\kappa'(x, y)&=&[\phi'(x),\phi'(y)]_{\g}-\phi'([x, y]_{\triangleright})\\
&=& [(\phi+t)(x),(\phi+t)(y)]_{\g}-(\phi+t)([x, y]_{\triangleright})\\
&=& [\phi(x),\phi(y)]_{\g}-\phi([x, y]_{\triangleright})+[\phi(x),t(y)]_{\g}+[t(x),\phi(y)]_{\g}\\
&&+[t(x),t(y)]_{\g}-t([x, y]_{\triangleright})\\
&=& \kappa(x, y)-t([x, y]_{\triangleright})\\
&=& \kappa(x, y)+\dM t(x,y),
\end{eqnarray*}
which implies that $[\kappa']=[\kappa]$ in the second cohomology group $H^2(\g_\triangleright, Z(\g))$. Thus, the cohomological class $[\kappa]$ does not depend on the choice of $s$.
\end{proof}

\begin{defi}
The cohomological class $[\kappa]\in H^2(\g_\triangleright, Z(\g))$ is called the {\bf obstruction class} of the inner post-Lie algebra $(\g,[\cdot,\cdot]_\g,\triangleright)$ being induced by a Rota-Baxter operator.
\end{defi}

\begin{thm}\label{kap-RB-operator}
An inner post-Lie algebra  $(\g,[\cdot,\cdot]_\g,\triangleright)$ is induced by a Rota-Baxter operator if and only if the obstruction class $[\kappa]$ in $H^2(\g_\triangleright, Z(\g))$ is trivial.	
\end{thm}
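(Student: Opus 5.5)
The idea is to compare the Rota--Baxter identity \eqref{RBO-Lie-alg} directly with the cocycle $\kappa$ from \eqref{kappa}. The key observation is this: for a linear map $\phi$ with $L_\triangleright(x)=\ad_{\phi(x)}$, we have $[x,y]_\triangleright=[\phi(x),y]_\g+[x,\phi(y)]_\g+[x,y]_\g$. Hence
\[
\kappa(x,y)=[\phi(x),\phi(y)]_\g-\phi\big([\phi(x),y]_\g+[x,\phi(y)]_\g+[x,y]_\g\big).
\]
So $\kappa$ vanishes identically exactly when $\phi$ is a Rota--Baxter operator. In that case $\triangleright=\triangleright_\phi$, because $x\triangleright y=[\phi(x),y]_\g$. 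The theorem thus reduces to showing that the class $[\kappa]$ is trivial if and only if some section has $\kappa=0$.

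For the direction ``induced $\Rightarrow$ trivial'', suppose $\triangleright=\triangleright_R$ for a Rota--Baxter operator $R$. Then $L_\triangleright(x)=\ad_{R(x)}$, so $s(x)=(x,R(x))$ lands in $\g^{(2)}$ and is a section. By the observation above, the corresponding $\kappa$ is identically zero. Proposition~\ref{kap-2-cocycle} says the class does not depend on the section, so $[\kappa]=0$.

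For the converse, suppose $[\kappa]=0$ for a section $s(x)=(x,\phi(x))$. Then there is a linear map $t\colon\g_\triangleright\to Z(\g)$ with $\kappa=\dM t$. With trivial coefficients this means $\kappa(x,y)=-t([x,y]_\triangleright)$, using the sign convention from the proof of Proposition~\ref{kap-2-cocycle}. Set $R:=\phi+t$. Since $\Imm(t)\subseteq Z(\g)$, we get $\ad_{R(x)}=\ad_{\phi(x)}=L_\triangleright(x)$, so $s'(x)=(x,R(x))$ is again a section. The computation in the proof of Proposition~\ref{kap-2-cocycle} gives $\kappa'(x,y)=\kappa(x,y)+t([x,y]_\triangleright)$ up to that sign convention; the choice of $t$ is made precisely so that this is $0$. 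By the key observation, $\kappa'=0$ means $R$ is a Rota--Baxter operator, and $x\triangleright y=[R(x),y]_\g$, so the post-Lie algebra is induced by $R$.

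The only delicate point is the sign bookkeeping in $\dM t$. One has to choose $t$ (or $-t$) so that adding it to $\phi$ kills $\kappa$ rather than doubling it. Everything else is the identity for $[x,y]_\triangleright$ in terms of $\phi$, which holds because $\ad_{\phi(x)}=L_\triangleright(x)$.
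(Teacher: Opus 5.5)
Your proof takes the same route as the paper's. You use the identity $[x,y]_\triangleright=[\phi(x),y]_\g+[x,\phi(y)]_\g+[x,y]_\g$ to see that $\kappa=0$ exactly when $\phi$ is Rota--Baxter, and you handle a trivial class by shifting $\phi$ by a $Z(\g)$-valued map. You also correctly supply, via independence of the section, the direction ``induced $\Rightarrow$ trivial'' that the paper omits.

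The sign in your converse is wrong as written, and it is exactly the point you flagged as delicate. The computation in Proposition~\ref{kap-2-cocycle} applies to any linear $u\colon\g\to Z(\g)$. It shows that the section $x\mapsto(x,\phi(x)+u(x))$ has cocycle $\kappa+\dM u$, that is, $\kappa(x,y)-u([x,y]_\triangleright)$. It is not $\kappa(x,y)+u([x,y]_\triangleright)$, as you quote. So if $\kappa=\dM t$, your choice $R=\phi+t$ has cocycle $\kappa+\dM t=2\kappa$, which is in general not $0$.

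The correct choice is $R=\phi-t$, which is what the paper takes. Its cocycle is $\kappa-\dM t=0$, so $R$ is a Rota--Baxter operator, and $x\triangleright y=[R(x),y]_\g$ still holds because $t$ is central. Phrased through the rule $\kappa_{\phi+u}=\kappa_\phi+\dM u$, no convention for $\dM$ needs to be unpacked. With this one sign corrected, your proof is complete.
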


\begin{proof}
If $\kappa=0$, for all $x,y\in \g$, we have
\begin{eqnarray*}
[\phi(x),\phi(y)]_{\g}&=&\phi([x, y]_{\triangleright})= \phi([\phi(x),y]_{\g}+[x,\phi(y)]_{\g}+[x,y]_{\g}),
\end{eqnarray*}
which implies that $\phi$ is a Rota-Baxter operator of weight $1$ on the Lie algebra $(\g,[\cdot,\cdot]_{\g})$.

Suppose that the obstruction class $[\kappa]$ in $H^2(\g_\triangleright, Z(\g))$ is trivial, i.\,e. $[\kappa]=[0]$, there exists a~linear map $t:\g \to Z(\g)$ such that $\kappa(x, y)=\dM t(x,y)=-t([x, y]_{\triangleright})$.
Define $R = \phi - t$. Let us show that $R$~is a Rota-Baxter operator of weight $1$ on $(\g,[\cdot,\cdot]_{\g})$:
\begin{eqnarray*}
[R(x),R(y)]_{\g}-R([x,y]_{\triangleright})
&=& [\phi(x)-t(x),\phi(y)-t(y)]_{\g}-(\phi-t)([x,y]_{\triangleright})\\
&=& [\phi(x),\phi(y)]_{\g}-[\phi(x),t(y)]_\g-[t(x),\phi(y)]_\g+[t(x),t(y)]_\g\\
&&-\phi([x,y]_{\triangleright})+t([x,y]_{\triangleright})\\
&=& [\phi(x),\phi(y)]_{\g}-\phi([x,y]_{\triangleright})+t([x,y]_{\triangleright})\\
&=& \kappa(x, y)+t([x, y]_{\triangleright})\\
&=& 0,
\end{eqnarray*}
since $\Imm(t)\subseteq Z(\g)$. The other direction of the proof is obvious, so we omit details.
\end{proof}

Recall that a \textbf{complete Lie algebra} is a Lie algebra such that its center is zero and all its derivations are inner.

\begin{cor}{\rm(\cite[Remark 5.11]{BGN})}. \label{cor-complete-Lie}
For any post-Lie algebra $(\g,[\cdot,\cdot]_\g,\triangleright)$, if the Lie algebra $(\g,[\cdot,\cdot]_\g)$ is complete, then the post-Lie algebra $(\g,[\cdot,\cdot]_\g,\triangleright)$ is inner and is defined via a Rota-Baxter operator  $\phi$ of weight $1$.
\end{cor}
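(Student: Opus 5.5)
The plan is to reduce the corollary to Theorem~\ref{kap-RB-operator} by exploiting the two defining features of a complete Lie algebra: every derivation is inner, and the center vanishes.

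First, innerness. By Lemma~\ref{post-Lie-sub}, or directly from Eq.~\eqref{Post-1}, each left multiplication $L_\triangleright(x)$ is a derivation of $(\g,[\cdot,\cdot]_\g)$. Completeness gives $\Der(\g)=\Inn(\g)$, so $\Imm(L_\triangleright)\subseteq\Inn(\g)$. Hence the post-Lie algebra is inner, and the construction preceding Proposition~\ref{kap-2-cocycle} applies.

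Second, the choice of $\phi$. Since $Z(\g)=0$, the map $\ad\colon\g\to\Inn(\g)$ is an isomorphism. The pullback $\g^{(2)}$ in diagram~\eqref{diagram1} is therefore exactly the graph of $\phi:=\ad^{-1}\circ L_\triangleright$. This gives a canonical section $s(x)=(x,\phi(x))$, and in fact it is the unique one. Linearity of $\phi$ is automatic, since it is a composition of linear maps. By construction $x\triangleright y=[\phi(x),y]_\g$ for all $x,y\in\g$.

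Third, the obstruction. The cocycle $\kappa$ of Eq.~\eqref{kappa} takes values in $Z(\g)=0$, so $\kappa=0$ identically. The coefficient space is zero, so $H^2(\g_\triangleright,Z(\g))=0$ and the obstruction class is trivial. The first paragraph of the proof of Theorem~\ref{kap-RB-operator} then applies with $t=0$: the identity $\kappa=0$ reads
\[
[\phi(x),\phi(y)]_\g=\phi\bigl([\phi(x),y]_\g+[x,\phi(y)]_\g+[x,y]_\g\bigr),
\]
so $\phi$ itself is a Rota-Baxter operator of weight $1$. It induces $\triangleright$ via Eq.~\eqref{RBO-post-Lie-alg}.

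I expect no real obstacle here. The only point needing care is stating explicitly that $Z(\g)=0$ is what makes $\phi$ unique and forces $\kappa$ to vanish pointwise, not merely up to a coboundary. That is why the Rota-Baxter operator can be taken to be $\phi$ itself rather than a corrected map $\phi-t$.
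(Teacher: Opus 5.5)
Your proposal is correct and follows essentially the same route as the paper. Completeness makes every $L_\triangleright(x)$ inner, so $x\triangleright y=[\phi(x),y]_\g$, and $Z(\g)=0$ forces $\kappa=0$ pointwise, so $\phi$ itself is a Rota-Baxter operator by the first part of the proof of Theorem~\ref{kap-RB-operator}. Your extra remark that $\phi=\ad^{-1}\circ L_\triangleright$ is the unique choice, and automatically linear, is a small precision the paper leaves implicit.
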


\begin{proof}
As it was noted in \cite{BGN}, since the Lie algebra $(\g,[\cdot,\cdot]_\g)$ is complete, all its derivations are inner. Hence there exists a map $\phi:\g \to \g$ such that $x \triangleright y=[\phi(x),y]_{\g}$, which implies that the post-Lie algebra $(\g,[\cdot,\cdot]_\g,\triangleright)$ is inner.
Moreover, since $Z(\g)$ is trivial, due to the definition of $\kappa$ we have
$\kappa(x, y)=[\phi(x),\phi(y)]_{\g}-\phi([x, y]_{\triangleright})=0$. Then by Theorem \ref{kap-RB-operator}, $\phi$ is a Rota-Baxter operator of weight $1$.
\end{proof}

\begin{ex}{\rm(\cite{LBG})}.
{\rm Let $(\g=\mathfrak{sl}(2,\mathbb{C}),[\cdot,\cdot]_{\g})$ be a 3-dimensional complex simple Lie algebra with a basis:
\begin{eqnarray*}
\begin{Bmatrix}
{e_1=\frac{1}{2}\begin{pmatrix}
            0 & 1 \\
            -1 & 0
\end{pmatrix}}, &
{e_2= \frac{1}{2i}\begin{pmatrix}
            0 & 1 \\
            1 & 0
\end{pmatrix}},&
{e_3=\frac{1}{2i}\begin{pmatrix}
            1 & 0 \\
            0 & -1
\end{pmatrix}}			
\end{Bmatrix},\quad i=\sqrt{-1},
\end{eqnarray*}
whose Lie bracket satisfies $[e_1,e_2]_\g=e_3$, $[e_2,e_3]_\g=e_1$ and  $[e_3,e_1]_\g=e_2$. Define $\triangleright$ on $(\g,[\cdot,\cdot]_\g)$ by
\begin{eqnarray*}
e_1\triangleright e_1&=0,\qquad\qquad\quad e_1\triangleright e_2=e_3, \quad\;\;\quad e_1\triangleright e_3=-e_2,\\
e_2\triangleright e_1&=\frac{i}{2}e_2+\frac{1}{2}e_3,\quad\;\; e_2\triangleright e_2=-\frac{i}{2}e_1,\quad e_2\triangleright e_3=-\frac{1}{2}e_1,\\
e_3\triangleright e_1&=-\frac{1}{2}e_2+\frac{i}{2}e_3,\quad e_3\triangleright e_2=\frac{1}{2}e_1,\quad\;\; e_3\triangleright e_3=-\frac{i}{2}e_1.
\end{eqnarray*}
Then $(\g,[\cdot,\cdot]_{\g},\triangleright)$ is a post-Lie algebra. Define a linear map $P: \g\rightarrow \g$  by
\begin{eqnarray*}
P(e_1)=e_1,\quad P(e_2)=-\frac{1}{2}e_2+\frac{i}{2}e_3,\quad P(e_3)=-\frac{i}{2}e_2-\frac{1}{2}e_3.
\end{eqnarray*}
It is easy to see that $P$ is a Rota-Baxter operator of weight $1$ and
Eq.~\eqref{RBO-post-Lie-alg} holds. Therefore, $(\g,[\cdot,\cdot]_\g,\triangleright)$ is an inner post-Lie algebra, whose obstruction class is trivial.	
}\end{ex}

\begin{ex}
{\rm
Let  $\g=\K e_1\oplus \K e_2\oplus \K e_3$ with nonzero product $[e_1,e_2]_\g=e_2$. Then for any $\alpha,\beta,\gamma\in \K$,
\begin{eqnarray*}
\varphi=\begin{pmatrix}
    1 & 0 & 0 \\
    0 & -1 & 0 \\
    \alpha & \beta & \gamma
\end{pmatrix}
\end{eqnarray*}
defines an inner post-Lie algebra $(\g,[\cdot,\cdot]_\g,\triangleright)$ by $x\triangleright y=[\varphi (x),y]_\g$ for all $x,y\in \g$. By {\rm \cite{Burde2019}}, $\varphi$ is a~Rota-Baxter operator of weight $1$ on $(\g,[\cdot,\cdot]_{\g})$ if and only if $\beta=0$.

Next, suppose  $\beta \neq 0$. For arbitrary elements $x = k_1 e_1 + k_2 e_2 + k_3 e_3$ and $y = l_1 e_1 + l_2 e_2 + l_3 e_3$ in $\mathfrak{g}$, define $\kappa$ by Eq.~\eqref{kappa}. Then we have $$
\kappa(x,y)=[\varphi(x),\varphi(y)]_{\g}-\varphi([x,y]_{\triangleright})=\beta(k_2l_1-k_1l_2)e_3.$$ By Proposition \ref{kap-2-cocycle}, $\kappa$ is a $2$-cocycle on the sub-adjacent Lie algebra $\g_\triangleright$ with the coefficients in the trivial representation $Z(\g)$. Furthermore,  for any $\lambda,\mu\in \K$,
\begin{eqnarray*}
t=\begin{pmatrix}
    0 & 0 & 0 \\
    0 & 0 & 0 \\
    \lambda & \beta & \mu
\end{pmatrix}
\end{eqnarray*}
defines a linear map from $\g$ to $Z(\g)$ such that $\kappa(x, y)=\dM t(x,y)=-t([x, y]_{\triangleright})$. Hence, the obstruction class $[\kappa]$ in $H^2(\g_\triangleright, Z(\g))$ is trivial.
Then by Theorem \ref{kap-RB-operator}, $R:=\varphi-t$ is a Rota-Baxter operator of weight $1$ on $(\g,[\cdot,\cdot]_{\g})$.
}\end{ex}

\section{A cohomological characterization of inner post-groups}

In this section, we introduce the obstruction class of an inner post-group being induced by a~Rota-Baxter operator. We show that an inner post-group is  induced by a Rota-Baxter operator if and only if the obstruction class is trivial.

Let us first recall the notion of post-groups \cite{BGST}.

\begin{defi}{\rm (\cite{BGST})}
A \textbf{post-group} $(G,\cdot,\rhd)$ is a group $(G,\cdot)$ equipped with a multiplication $\rhd:G \times G \to G$ such that
\begin{itemize}
\item[{\rm(i)}] for all $a\in G$, the left multiplication operator
$$ L^{\rhd}({a}):G \to G, \quad L^{\rhd}(a)(b)=a \rhd b, \quad \forall b\in G $$
is an automorphism of the group $(G,\cdot)$. That means,
\begin{equation}\label{post-group-1}
a \rhd (b\cdot c)=(a\rhd b) \cdot (a \rhd c), \quad \forall a,b,c\in G;
\end{equation}
\item[{\rm(ii)}] the following ``weighted" associativity holds,
\begin{equation}\label{post-group-2}
(a\cdot (a\rhd b))\rhd c=a \rhd (b\rhd c), \quad \forall a,b,c\in G.
\end{equation}
\end{itemize}
If the group $(G,\cdot)$ is an abelian group, then $(G,\cdot,\rhd)$ is called a \textbf{pre-group}.
\end{defi}

The following characterization of post-groups is given in \cite{BGST}.

\begin{lem}\label{post-group-sub}{\rm (\cite{BGST})}
Let $(G,\cdot,\rhd)$ be a post-group. Define $\circ:G \times G \to G$ by
$$
a \circ b=a \cdot (a \rhd b),\quad \forall a,b\in G.
$$
Then $(G,\circ)$ is a group, which is called the \textbf{sub-adjacent group} of the post-group $(G,\cdot,\rhd)$. Moreover, Eqs.~\eqref{post-group-1}-\eqref{post-group-2} equivalently mean that the left multiplication $L^{\rhd}:G \to \Aut(G)$ is an action of the sub-adjacent group $(G,\circ)$ on the group $(G,\cdot)$.
\end{lem}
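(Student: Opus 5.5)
The plan is to verify the group axioms for $(G,\circ)$ directly from \eqref{post-group-1} and \eqref{post-group-2}. Then I reinterpret these two identities as the statement that $L^{\rhd}$ is a homomorphism from $(G,\circ)$ into $\Aut(G,\cdot)$.

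\emph{Preliminary facts.} By \eqref{post-group-1}, each $L^{\rhd}(a)$ is an endomorphism of $(G,\cdot)$. Hence $a\rhd e=e$ and $a\rhd b^{-1}=(a\rhd b)^{-1}$, where $e$ is the identity of $(G,\cdot)$. The key reformulation of \eqref{post-group-2} is
$$L^{\rhd}(a\circ b)=L^{\rhd}(a)\,L^{\rhd}(b).$$

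\emph{Associativity of $\circ$.} Expand the left-hand side using this reformulation together with \eqref{post-group-1}:
$$(a\circ b)\circ c=a\cdot(a\rhd b)\cdot\big((a\circ b)\rhd c\big)=a\cdot(a\rhd b)\cdot\big(a\rhd(b\rhd c)\big).$$
Expand the right-hand side using \eqref{post-group-1}:
$$a\circ(b\circ c)=a\cdot\big(a\rhd(b\cdot(b\rhd c))\big)=a\cdot(a\rhd b)\cdot\big(a\rhd(b\rhd c)\big).$$
The two expressions agree.

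\emph{Identity element.} First, $e\circ e=e\cdot(e\rhd e)=e$, using $e\rhd e=e$. This gives $L^{\rhd}(e)=L^{\rhd}(e\circ e)=L^{\rhd}(e)^2$. Once bijectivity of $L^{\rhd}(e)$ is known, it follows that $L^{\rhd}(e)=\Id$. Then $e\circ b=e\cdot(e\rhd b)=b$, and $a\circ e=a\cdot(a\rhd e)=a$, so $e$ is a two-sided identity.

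\emph{Inverses.} The right inverse of $a$ is $\bar a:=L^{\rhd}(a)^{-1}(a^{-1})$, since then $a\circ\bar a=a\cdot a^{-1}=e$. In a monoid where every element has a right inverse, right inverses are automatically two-sided, so $(G,\circ)$ is a group. Finally, because $L^{\rhd}(e)=\Id$ and $L^{\rhd}$ is multiplicative, $L^{\rhd}$ is a group homomorphism $(G,\circ)\to\Aut(G,\cdot)$, i.e.\ an action.

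\emph{The converse direction.} Suppose $L^{\rhd}$ is an action of $(G,\circ)$ by automorphisms of $(G,\cdot)$. Then \eqref{post-group-1} says exactly that each $L^{\rhd}(a)$ is an automorphism, and \eqref{post-group-2} is exactly multiplicativity of $L^{\rhd}$ with respect to $\circ$. This gives the claimed equivalence.

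\emph{Main obstacle.} The delicate point is invertibility of $L^{\rhd}(a)$, on which the identity step and the construction of $\bar a$ both rely. I read condition (i) of the definition as asserting that $L^{\rhd}(a)$ is an automorphism, so bijectivity is part of the hypothesis; I would state this reading explicitly. If one insisted on assuming only \eqref{post-group-1}, bijectivity would not follow. For instance, $a\rhd b=e$ satisfies both identities, yet $a\circ b=a$ does not define a group. So the bijectivity assumption is genuinely necessary.
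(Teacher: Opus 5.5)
Your proof is correct and complete. The paper gives no proof of this lemma; it only quotes it from \cite{BGST}. Your argument is the standard direct verification: associativity comes from \eqref{post-group-1} together with the reformulation $L^{\rhd}(a\circ b)=L^{\rhd}(a)L^{\rhd}(b)$ of \eqref{post-group-2}. The identity comes from $L^{\rhd}(e)^2=L^{\rhd}(e)$ and injectivity of $L^{\rhd}(e)$, and inverses come from the right inverse $L^{\rhd}(a)^{-1}(a^{-1})$ plus the monoid fact that right inverses are two-sided.

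You are right that bijectivity of $L^{\rhd}(a)$ must be taken from the wording of condition (i), since \eqref{post-group-1} alone does not give it. Your counterexample $a\rhd b=e$ works as stated only for nontrivial $G$. This also shows that the phrase ``equivalently mean'' in the statement tacitly includes bijectivity.
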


Denote by $Z(G)$ the center of $(G,\cdot)$.
\begin{lem}
Let $(G,\cdot,\rhd)$ be a post-group. Then $L^{\rhd}$ induces a group action of $(G,\circ)$ on $Z(G)$.
\end{lem}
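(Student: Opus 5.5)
The plan mirrors the Lie algebra lemma: first show that $L^{\rhd}(a)$ maps $Z(G)$ into itself for every $a\in G$, then restrict the action of Lemma~\ref{post-group-sub}.

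\textbf{Invariance of the center.} Fix $a\in G$ and $z\in Z(G)$. By Eq.~\eqref{post-group-1}, $L^{\rhd}(a)$ is an automorphism of $(G,\cdot)$. Any group automorphism preserves the center, and this is the only point needing an argument. Given $c\in G$, surjectivity of $L^{\rhd}(a)$ lets us write $c=a\rhd b$ for some $b\in G$. Then
\[
(a\rhd z)\cdot c=(a\rhd z)\cdot(a\rhd b)=a\rhd(z\cdot b)=a\rhd(b\cdot z)=(a\rhd b)\cdot(a\rhd z)=c\cdot(a\rhd z).
\]
Hence $a\rhd z\in Z(G)$.

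\textbf{Restriction of the action.} It follows that each $L^{\rhd}(a)$ restricts to a map $Z(G)\to Z(G)$. This restriction is an injective homomorphism. By Lemma~\ref{post-group-sub}, $L^{\rhd}:(G,\circ)\to\Aut(G)$ is a group homomorphism, so we have
\[
L^{\rhd}(a\circ b)=L^{\rhd}(a)L^{\rhd}(b) \quad\text{and}\quad L^{\rhd}(e)=\Id.
\]
Here $e$ is also the identity of $(G,\circ)$, because $e\circ b=e\cdot(e\rhd b)$ and $e\rhd b=b$ by the action property. Applying the first identity to $a$ and its $\circ$-inverse $\bar a$ shows that the restriction of $L^{\rhd}(\bar a)$ is an inverse of the restriction of $L^{\rhd}(a)$ on $Z(G)$. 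So each restriction is bijective and lies in $\Aut(Z(G))$.

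The two identities above, restricted to $Z(G)$, then give a group homomorphism $(G,\circ)\to\Aut(Z(G))$, which is the claimed action. The only mildly nontrivial step is the center invariance, and it needs the surjectivity of $L^{\rhd}(a)$. That surjectivity is part of its being an automorphism, as recorded in Lemma~\ref{post-group-sub}.
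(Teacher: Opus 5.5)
Your proof is correct and is essentially the paper's argument: both use surjectivity of the automorphism $L^{\rhd}(a)$ to write an arbitrary $c$ as $a\rhd b$, then push the centrality of $z$ through Eq.~\eqref{post-group-1} (the paper phrases this as $\Ad_{a\rhd z}(c)=c$). You additionally spell out why the restrictions lie in $\Aut(Z(G))$ and assemble into a homomorphism, which the paper leaves implicit. One small slip: your justification that $e$ is the identity of $(G,\circ)$ is circular as phrased, since it uses $e\rhd b=b$, which already presupposes that fact. The identification is not needed for the lemma, and it holds anyway: $a\circ e=a\cdot(a\rhd e)=a$, and a right identity in a group is the identity.
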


\begin{proof}
For any $a,c\in G$, since $L^{\rhd}(a)$ is an automorphism of the group $(G,\cdot)$, there exists unique $d\in G$ such that $a \rhd d=c$.
Then for any $b\in Z(G)$, by Eq. \eqref{post-group-1}, we have
\begin{eqnarray*}
 \Ad_{a \rhd b} (c)&=&\Ad_{a \rhd b} (a \rhd d)\\
 &=&(a \rhd b) \cdot (a \rhd d) \cdot (a \rhd b)^{-1}\\
 &=&(a \rhd b) \cdot (a \rhd d) \cdot (a \rhd b^{-1})\\
 &=&a \rhd (\Ad_b~d)\\
 &=&a \rhd d=c,
\end{eqnarray*}
which implies that $a \rhd b \in Z(G)$. Therefore, $L^{\rhd}$ induces a group action of $(G,\circ)$ on $Z(G)$.
\end{proof}

Note that for a post-group $(G,\cdot,\rhd)$, the left multiplication $L^{\rhd}(a)$ takes values in $\Aut(G)$ for all $a\in G$.
Now we introduce the notion of inner post-groups. Let $(G,\cdot)$ be a group. Denote by $\Inn(G)$ the inner automorphism group of $(G,\cdot)$.

\begin{defi}
A post-group $(G,\cdot,\rhd)$ is called \textbf{inner}, if $\Imm(L^{\rhd})\subseteq \Inn(G).$
\end{defi}

A class of inner post-groups are given by Rota-Baxter operators on groups.

\begin{defi}{\rm (\cite{GLS})}
Let $(G,\cdot)$ be a group. A map $\huaB:G \to G$ is called a \textbf{Rota-Baxter operator} if it satisfies the following equation:
\begin{equation}\label{RBO-group}
\huaB(a) \cdot \huaB(b)=\huaB( a \cdot \Ad_{\huaB(a)} b),\quad \forall a,b\in G.
\end{equation}
the triple	$(G,\cdot,\huaB)$ is called a \textbf{Rota-Baxter group}.
\end{defi}

Rota-Baxter groups naturally give rise to post-groups, which are inner.

\begin{lem}\label{lem:RB-group-post-group}{\rm (\cite{BGST,RBBraces})}
Let $(G,\cdot,\huaB)$ be a Rota-Baxter group. Then $(G,\cdot,\rhd_{\huaB})$ is a post-group, where the binary operation $\rhd_{\huaB}$ is given by
\begin{equation}\label{rhd-huaB}
a \rhd_{\huaB} b := \Ad_{\huaB(a)}b=  \huaB(a) \cdot b \cdot (\huaB(a))^{-1}, \quad \forall a,b\in G.
\end{equation}
Moreover, this post-group is inner.
\end{lem}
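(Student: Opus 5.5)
We verify the two axioms of a post-group for $a\rhd_{\huaB} b=\Ad_{\huaB(a)}b$ directly, and then note that innerness is immediate. First, for each $a\in G$ the map $L^{\rhd_{\huaB}}(a)=\Ad_{\huaB(a)}$ is conjugation by the fixed element $\huaB(a)$. Hence it is an automorphism of $(G,\cdot)$, which gives Eq.~\eqref{post-group-1} at once:
$$\Ad_{\huaB(a)}(b\cdot c)=\Ad_{\huaB(a)}b\cdot \Ad_{\huaB(a)}c.$$
The same observation shows $\Imm(L^{\rhd_{\huaB}})\subseteq \Inn(G)$, so once the post-group axioms hold, the post-group is inner by definition.

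The substantive step is Eq.~\eqref{post-group-2}. The key observation is that
$$a\cdot(a\rhd_{\huaB} b)=a\cdot \Ad_{\huaB(a)}b,$$
which is exactly the argument of $\huaB$ on the right-hand side of the Rota-Baxter identity \eqref{RBO-group}. Therefore
$$\huaB\bigl(a\cdot(a\rhd_{\huaB} b)\bigr)=\huaB(a)\cdot\huaB(b).$$
Using this, the left-hand side of \eqref{post-group-2} becomes
$$\bigl(a\cdot(a\rhd_{\huaB} b)\bigr)\rhd_{\huaB} c=\Ad_{\huaB(a)\huaB(b)}c=\Ad_{\huaB(a)}\Ad_{\huaB(b)}c,$$
and the last expression equals $a\rhd_{\huaB}(b\rhd_{\huaB} c)$, which is the right-hand side. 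This settles axiom (ii).

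There is no real obstacle beyond recognizing that the ``weighted'' product $a\cdot(a\rhd b)$ is precisely the argument appearing in \eqref{RBO-group}. It is also worth recording, for later use, that the sub-adjacent product is
$$a\circ b=a\,\huaB(a)\,b\,\huaB(a)^{-1},$$
and that the computation above shows $\huaB(a\circ b)=\huaB(a)\huaB(b)$. In other words, $\huaB\colon(G,\circ)\to(G,\cdot)$ is a homomorphism, in line with Lemma~\ref{post-group-sub}. This is the group analogue of the fact used for Lemma~\ref{lem-RB-innPL}, and it will serve as the group counterpart of the splitting $\phi$ when the obstruction class for inner post-groups is constructed.
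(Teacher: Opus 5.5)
Your proof is correct and complete, and it is essentially the standard argument. The paper states this lemma without proof, citing the literature, and the argument there is the same direct verification: axiom (i) holds because $\Ad_{\huaB(a)}$ is an automorphism of $(G,\cdot)$; axiom (ii) follows from $\huaB(a\cdot\Ad_{\huaB(a)}b)=\huaB(a)\huaB(b)$ together with $\Ad_{xy}=\Ad_x\Ad_y$; and innerness is immediate since $L^{\rhd_{\huaB}}(a)=\Ad_{\huaB(a)}$.
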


\begin{cor}
Let $(G,\cdot,\huaB)$ be a Rota-Baxter group, and $(G,\cdot,\rhd_{\huaB})$ the induced post-group. Then the induced group action $L^{\rhd}$ of
$(G,\circ)$ on $Z(G)$ is trivial.
\end{cor}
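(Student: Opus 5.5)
The plan is to mirror the Lie algebra case, the Corollary right after Lemma~\ref{lem-RB-innPL}. We take $a\in G$ and $b\in Z(G)$ and compute $L^{\rhd}(a)(b)$ directly from the defining formula \eqref{rhd-huaB}:
$$
L^{\rhd}(a)(b)=a\rhd_{\huaB} b=\huaB(a)\cdot b\cdot \huaB(a)^{-1}.
$$

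The only input needed is that $b$ is central in $(G,\cdot)$. Hence $b$ commutes with $\huaB(a)$, and so $\huaB(a)\cdot b\cdot\huaB(a)^{-1}=b$. That is, $L^{\rhd}(a)|_{Z(G)}=\Id_{Z(G)}$ for every $a\in G$.

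By the preceding lemma, $L^{\rhd}$ restricts to an action of the sub-adjacent group $(G,\circ)$ on $Z(G)$. Since every element of $(G,\circ)$ acts as the identity on $Z(G)$, this action is trivial, which is the claim. There is no real obstacle here: the group structure $\circ$ is never used beyond the fact, already established, that the restriction is an action.
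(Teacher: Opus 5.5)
Your proof is correct and is the paper's argument: since $b\in Z(G)$, $L^{\rhd}(a)(b)=\Ad_{\huaB(a)}b=\huaB(a)\cdot b\cdot\huaB(a)^{-1}=b$ for every $a\in G$. Hence every element of $(G,\circ)$ acts as the identity on $Z(G)$.
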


\begin{proof}
It follows from $L^{\rhd}(a)(b)= \Ad_{\huaB(a)}b=b$ for all $a \in G$ and $b \in Z(G)$.
\end{proof}

\begin{pro}
Two Rota-Baxter operators $\huaB_1, \huaB_2$ yield the same post-group $(G,\cdot,\rhd)$ by Eq.~\eqref{rhd-huaB} if and only if there exists a $1$-cocycle $\zeta \colon (G,\circ) \to Z(G)$ on the group $(G,\circ)$ with the coefficients in the trivial group action $Z(G)$ such that $\huaB_2(a)=\huaB_1(a)\cdot \zeta(a) $ for all $a \in G$.
\end{pro}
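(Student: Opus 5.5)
We mirror the proof of the corresponding statement for Rota-Baxter Lie algebras, replacing the additive correction $t$ by the multiplicative correction $\zeta$.

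\emph{Forward direction.} Suppose $\huaB_1,\huaB_2$ induce the same post-group. Then $\Ad_{\huaB_1(a)}b=\Ad_{\huaB_2(a)}b$ for all $a,b\in G$. Set $\zeta(a):=\huaB_1(a)^{-1}\cdot\huaB_2(a)$. Conjugation by $\zeta(a)$ is then the identity on $G$, so $\zeta(a)\in Z(G)$, and we may write $\huaB_2(a)=\huaB_1(a)\cdot\zeta(a)$.

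Next we check that $\zeta$ is a homomorphism $(G,\circ)\to Z(G)$. Since the action of $(G,\circ)$ on $Z(G)$ induced by $L^{\rhd}$ is trivial (the preceding Corollary), a $1$-cocycle with trivial coefficients is exactly such a homomorphism. Because both operators give the same $\rhd$, they give the same $\circ$, namely
\[
a\circ b=a\cdot\Ad_{\huaB_i(a)}b .
\]
Eq.~\eqref{RBO-group} then reads $\huaB_i(a)\cdot\huaB_i(b)=\huaB_i(a\circ b)$ for $i=1,2$. Substituting $\huaB_2=\huaB_1\cdot\zeta$ and using centrality of $\zeta$ gives
\[
\huaB_1(a)\huaB_1(b)\zeta(a)\zeta(b)=\huaB_1(a\circ b)\zeta(a\circ b).
\]
Cancelling $\huaB_1(a)\huaB_1(b)=\huaB_1(a\circ b)$ yields $\zeta(a\circ b)=\zeta(a)\cdot\zeta(b)$. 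This is the cocycle condition $\zeta(a\circ b)=\zeta(a)\cdot(a\rhd\zeta(b))$, because $a\rhd\zeta(b)=\zeta(b)$.

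\emph{Converse.} Suppose $\huaB_2=\huaB_1\cdot\zeta$ with $\zeta$ central-valued. Then $\Ad_{\huaB_2(a)}=\Ad_{\huaB_1(a)}\Ad_{\zeta(a)}=\Ad_{\huaB_1(a)}$, so the two operators induce the same $\rhd$ by Eq.~\eqref{rhd-huaB}.

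The only delicate point is fixing conventions. One must note that the relevant action on $Z(G)$ is trivial, and that the cocycle is taken on $(G,\circ)$. The statement presupposes both $\huaB_1$ and $\huaB_2$ are Rota-Baxter, so the computation above only needs to extract the homomorphism property from both identities. If instead one wanted to show that $\huaB_1\cdot\zeta$ is automatically Rota-Baxter, the same computation read backwards does it: a homomorphism $\zeta$ to the center multiplies both sides of Eq.~\eqref{RBO-group} by $\zeta(a)\zeta(b)=\zeta(a\circ b)$. Everything else is routine.
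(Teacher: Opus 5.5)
Your proof is correct and follows essentially the same route as the paper. You define $\zeta(a)=\huaB_1(a)^{-1}\cdot\huaB_2(a)$, get centrality from $\Ad_{\huaB_1(a)}=\Ad_{\huaB_2(a)}$, and extract $\zeta(a\circ b)=\zeta(a)\cdot\zeta(b)$ from the Rota--Baxter identities of both operators; the paper expands $\huaB_1(a\circ b)^{-1}\cdot\huaB_2(a\circ b)$ directly, while you substitute and cancel, which is the same computation, and the converse is the identical conjugation check (your extra remark that $\huaB_1\cdot\zeta$ is automatically Rota--Baxter is also correct, though not needed here).
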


\begin{proof}
On the one hand, suppose that two Rota-Baxter operators $\huaB_1, \huaB_2$ yield the same post-group $(G,\cdot,\rhd)$ by Eq. \eqref{rhd-huaB}, it means that $a \rhd b=\Ad_{\huaB_1(a)} b=\Ad_{\huaB_2(a)} b $ for all $a,b\in G$. Define $\zeta(a):=(\huaB_1(a))^{-1}\cdot \huaB_2(a)$ for all $a\in G$, then we have
$$ \zeta(a)\cdot b=(\huaB_1(a))^{-1}\cdot \huaB_2(a)\cdot b=b \cdot (\huaB_1(a))^{-1}\cdot \huaB_2(a)=b \cdot \zeta(a),$$
which implies that $\Imm(\zeta)\subseteq Z(G)$. Moreover, we also have
\begin{eqnarray*}
\zeta(a \circ b)&=&\huaB_1(a \circ b)^{-1} \cdot \huaB_2(a \circ b)\\
&=& (\huaB_1(a) \cdot \huaB_1(b))^{-1} \cdot \huaB_2(a) \cdot \huaB_2(b)\\
&=& \huaB_1(b)^{-1} \cdot \huaB_1(a)^{-1} \cdot \huaB_2(a) \cdot \huaB_2(b)\\
&=& \huaB_1(b)^{-1} \cdot \zeta(a) \cdot \huaB_2(b)\\
&=&  \zeta(a) \cdot \huaB_1(b)^{-1}\cdot \huaB_2(b)\\
&=&  \zeta(a) \cdot \zeta(b),
\end{eqnarray*}
which implies that $\zeta(a \circ b)= \zeta(a) \cdot \zeta(b)=\zeta(a)\cdot (a\rk \zeta(b))$ for all $a,b\in G$. Thus,   $\zeta$ is a $1$-cocycle on the group $(G,\circ)$ with the coefficients in trivial group action $Z(G)$.

On the other hand, suppose that there exists a $1$-cocycle $\zeta \colon (G,\circ) \to Z(G)$ on the group $(G,\circ)$ with the coefficients in the trivial group action $Z(G)$ such that $\huaB_2(a)= \huaB_1(a)\cdot \zeta(a) $ for all $a \in G$. Then  $\huaB_1$ and $\huaB_2$ yield the same post-group $(G,\cdot,\rhd)$, since
$$a \rhd b=\Ad_{\huaB_2(a)} b=\Ad_{\huaB_1(a)\cdot \zeta(a)} b=\Ad_{\huaB_1(a)}b,\quad \forall       a,b\in G.$$
The proof is finished.
\end{proof}

A natural question arises:

\begin{center}
{\bf	When is an inner post-group induced by a Rota-Baxter operator?}
\end{center}

Consider the adjoint representation $\Ad:G \to \Aut(G)$ as a homomorphism of groups. Since $\ker(\Ad)=Z(G)$, we have $G/ Z(G) \cong \Imm(\Ad)=\Inn(G)$ by the First Isomorphism Theorem in the case of groups. Then we have the following central extension of groups:
\[
\xymatrix{
0 \ar[r] & Z(G)  \ar[r]^{i } & (G,\cdot) \ar[r]^{ p\quad\quad} & G/ Z(G) \cong \Inn(G)\ar[r] & 0.
}\]

Assume that $(G,\cdot,\rhd)$ is an inner post-group. Then by pullback along $L^{\rhd}:G \to \Aut(G)$, we obtain the following commutative diagram, which consists of two central extensions of groups:
\begin{eqnarray}\label{diagram2}
\xymatrix{
0 \ar[r] & Z(G)  \ar[d]_{\Id}   \ar[r]^{i_2} & G^{(2)} \ar[d]_{p_2}  \ar[r]^{p_1} &(G,\circ)     \ar[d]_{L^{\rhd}}   \ar[r] &0\\
0 \ar[r] & Z(G)    \ar[r]^{i} & (G,\cdot)   \ar[r]^{\Ad} & \Inn(G)           \ar[r] &0,
}
\end{eqnarray}
where $(G^{(2)},\bullet)$ is a subgroup of the direct product between two groups
$(G,\circ)$ and $(G,\cdot)$, given by
\begin{eqnarray}\label{G-2}
G^{(2)}=\{(a,b)\in G \times G~|~L^{\rhd}(a)= \Ad_{b}\},
\end{eqnarray}
$i$ is the inclusion, $i_2$ is the inclusion to the second summand and $p_1,p_2$ are projections to the first summand and the second summand respectively. According to Eq. \eqref{G-2}, it is obvious that the right square commutes.

Now choose a section of the central extension of groups in the first line of the diagram \eqref{diagram2}, which is a map $S:(G,\circ) \to G^{(2)}$ such that $p_1\circ S=\Id_{G}$. It is obvious that $S$ is of  the form $S(a)=(a,\Phi(a))$ for a map $\Phi:G \to G$. Since the right square commutes, we have
$$ 
\Ad \circ p_2 \circ S(a)=L^{\rhd} \circ p_1 \circ S(a)=L^{\rhd}(a), \quad \forall a \in G,
$$
which implies that $L^{\rhd}(a)=\Ad_{\Phi(a)}$ for all $a\in G$.

Define $\omega:G \times G \to G^{(2)}$ by
$$
\omega(a,b)=S(b)^{-1} \bullet S(a)^{-1} \bullet S(a\circ b),\quad \forall a,b\in G.
$$
Obviously, $p_1(\omega(a,b))=e$, which implies that $p_1(\omega(a,b))\in i_2(Z(G)).$ Identifying $Z(\g)$ and $i_2(Z(\g))$, it follows that $\omega:G \times G \to Z(G)$ is given  by
\begin{eqnarray}\label{omega}
\omega(a,b)=\Phi(b)^{-1} \cdot \Phi(a)^{-1} \cdot \Phi(a \circ b),\quad \forall a,b\in G.
\end{eqnarray}

On the other hand, the induced group action $\mu_{S}:(G,\circ)\to \Aut(Z(G))$ of the section $S$ is given by
$$  
\mu_{S}(a)(b)=\Ad^{\bullet}_{(a,\Phi(a))} (e,b)=b,\quad \forall a\in G,b\in Z(G), $$
which is trivial.

\begin{pro}\label{prop:InnerGroup2cocycle}
Let $(G,\cdot,\rhd)$ be an inner post-group, $S$ and $\omega$ defined as above. Then $\omega$ is a~2-cocycle on the group $(G,\circ)$ with the coefficients in the trivial group action $Z(G)$. Moreover, its cohomology class in $H^2((G,\circ), Z(G))$ does not depend on the choice of the section $S$.
\end{pro}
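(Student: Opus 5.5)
The plan mirrors the proof of Proposition~\ref{kap-2-cocycle}, now in the multiplicative setting. The first step is to check that $\omega$ takes values in $Z(G)$. Since $L^{\rhd}$ is a homomorphism from $(G,\circ)$ (Lemma~\ref{post-group-sub}), we have
\[
\Ad_{\Phi(a\circ b)}=L^{\rhd}(a\circ b)=L^{\rhd}(a)L^{\rhd}(b)=\Ad_{\Phi(a)\cdot\Phi(b)},
\]
so $\Phi(b)^{-1}\cdot\Phi(a)^{-1}\cdot\Phi(a\circ b)\in\ker\Ad=Z(G)$. We can rewrite this as $\Phi(a\circ b)=\Phi(a)\cdot\Phi(b)\cdot\omega(a,b)$, where $\omega(a,b)$ is central.

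For the cocycle identity we use the trivial action. The condition to verify is
\[
\omega(b,c)\cdot\omega(a,b\circ c)=\omega(a\circ b,c)\cdot\omega(a,b).
\]
We expand $\Phi(a\circ b\circ c)$ in two ways, using associativity of $\circ$ (Lemma~\ref{post-group-sub}):
\[
\Phi((a\circ b)\circ c)=\Phi(a\circ b)\Phi(c)\omega(a\circ b,c)=\Phi(a)\Phi(b)\Phi(c)\,\omega(a,b)\,\omega(a\circ b,c),
\]
and
\[
\Phi(a\circ(b\circ c))=\Phi(a)\Phi(b\circ c)\omega(a,b\circ c)=\Phi(a)\Phi(b)\Phi(c)\,\omega(b,c)\,\omega(a,b\circ c).
\]
Centrality lets us move all $\omega$ factors to the right. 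Cancelling $\Phi(a)\Phi(b)\Phi(c)$ then gives the cocycle identity, because $Z(G)$ is abelian and the action is trivial.

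For independence of the section, let $S'(a)=(a,\Phi'(a))$ be another section. Then $\Ad_{\Phi'(a)}=L^{\rhd}(a)=\Ad_{\Phi(a)}$, so $\zeta(a):=\Phi(a)^{-1}\Phi'(a)\in Z(G)$. Substituting $\Phi'=\Phi\cdot\zeta$ into \eqref{omega} and using centrality gives
\[
\omega'(a,b)=\omega(a,b)\,\zeta(b)^{-1}\zeta(a)^{-1}\zeta(a\circ b).
\]
The correction factor is exactly the coboundary of the $1$-cochain $\zeta$ for the trivial action (up to the sign/inverse convention for $\dM$). Hence $[\omega']=[\omega]$ in $H^2((G,\circ),Z(G))$.

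No step is a real obstacle. The only point needing care is the first one: justifying that $\omega$ is central requires that $L^{\rhd}$ be a homomorphism of $(G,\circ)$. We also have to fix the coboundary convention so that the identity above matches the standard group $2$-cocycle condition.
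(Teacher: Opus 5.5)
Your proposal is correct and matches the paper's proof in substance. It uses the same central-valued factor set $\omega$, the same comparison map between the two sections, and verifies the same identity $\omega(b,c)\,\omega(a,b\circ c)=\omega(a,b)\,\omega(a\circ b,c)$. You take $\zeta(a)=\Phi(a)^{-1}\Phi'(a)$ where the paper takes $\Phi'(a)^{-1}\Phi(a)$, which only inverts the coboundary.

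The one cosmetic difference is in how the cocycle identity is obtained. You first prove centrality of $\omega$ from $L^{\rhd}(a\circ b)=L^{\rhd}(a)L^{\rhd}(b)$ and then expand $\Phi(a\circ b\circ c)$ in two ways. The paper instead computes it directly from $a\rhd(b\rhd x)=(a\circ b)\rhd x$, which is the same input.
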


\begin{proof}
In fact, this conclusion follows from the general theory of extensions of groups. To be self-contained, we roughly give the proof.

By the associative law of the group $(G,\cdot)$ and Eq. \eqref{omega}, for all $a,b,c\in G$, we have
\begin{eqnarray*}
&& \omega(b,c)\cdot \omega(a,b \circ c)\\
&=& \Phi(c)^{-1} \cdot \Phi(b)^{-1} \cdot \Phi(b \circ c)\cdot \Phi(b \circ c)^{-1} \cdot \Phi(a)^{-1}
\cdot \Phi(a\circ b \circ c)\\
&=& \Phi(b)^{-1} \cdot \Phi(a)^{-1} \cdot \Phi(a) \cdot \Phi(b) \cdot \Phi(c)^{-1} \cdot \Phi(b)^{-1}\cdot \Phi(a)^{-1} \cdot \Phi(a\circ b \circ c)\\
&=& \Phi(b)^{-1} \cdot \Phi(a)^{-1} \cdot (a \rhd (b \rhd \Phi(c)^{-1})) \cdot \Phi(a\circ b \circ c)\\
&=& \Phi(b)^{-1} \cdot \Phi(a)^{-1} \cdot ((a \circ b)\rhd \Phi(c)^{-1}) \cdot \Phi(a\circ b \circ c)\\
&=& \Phi(b)^{-1} \cdot \Phi(a)^{-1} \cdot \Phi(a \circ b) \cdot  \Phi(c)^{-1} \cdot \Phi(a \circ b)^{-1} \cdot \Phi(a\circ b \circ c)\\
&=& \omega(a,b)\cdot \omega(a\circ b,c),
\end{eqnarray*}
which implies that $\omega$ is a $2$-cocycle on the group $(G,\circ)$ with the coefficients in the trivial group action $Z(G)$.

Suppose that there exists another section $S':(G,\circ) \to G^{(2)}$ given by $S'(a)=(a,\Phi'(a))$ for a~map $\Phi':G \to G$. Since the right square commutes, we have
$$\Ad_{\Phi'(a)} =\Ad \circ p_2 \circ S'(a)=L^{\rhd}\circ p_1 \circ S'(a)
=L^{\rhd}(a),\quad \forall a,b\in G. $$
Define $\zeta:G \to G$ by $\zeta(a):=(\Phi'(a))^{-1}\cdot \Phi(a)$, then we have
$$ a\rhd b=\Phi(a) \cdot b \cdot (\Phi(a))^{-1}=\Phi'(a) \cdot b \cdot (\Phi'(a))^{-1},  $$
which implies that $b=\zeta(a) \cdot b \cdot (\zeta(a))^{-1}$.
That means, $\Imm(\zeta) \subseteq Z(G).$

Next we define $\omega':G \times G \to Z(G)$ by
\begin{eqnarray*}
\Phi'(a \circ b)=\Phi'(a) \cdot \Phi'(b) \cdot \omega'(a,b),\quad \forall a,b\in G.
\end{eqnarray*}
Then we have
\begin{eqnarray*}
&&\zeta(a)\cdot \zeta(b) \cdot \omega(a,b) \cdot \zeta(a \circ b)^{-1}\\
&=& (\Phi'(a))^{-1}\cdot \Phi(a) \cdot (\Phi'(b))^{-1}\cdot \Phi(b) \cdot  \Phi(b)^{-1} \cdot \Phi(a)^{-1} \cdot \Phi(a \circ b) \cdot (\Phi(a \circ b))^{-1} \cdot \Phi'(a \circ b)\\
&=& (\Phi'(a))^{-1}\cdot \Phi(a) \cdot (\Phi'(b))^{-1} \cdot \Phi(a)^{-1} \cdot \Phi'(a \circ b)\\
&=& (\Phi'(a))^{-1}\cdot \Phi(a) \cdot (\Phi'(b))^{-1} \cdot \Phi(a)^{-1} \cdot \Phi'(a) \cdot (\Phi'(a))^{-1} \cdot \Phi'(a \circ b)\\
&=& \zeta(a) \cdot (\Phi'(b))^{-1} \cdot  (\zeta(a))^{-1} \cdot (\Phi'(a))^{-1}
\cdot \Phi'(a \circ b)\\
&=& (\Phi'(b))^{-1} \cdot (\Phi'(a))^{-1} \cdot \Phi'(a \circ b)\\
&=& \omega'(a,b),
\end{eqnarray*}
which implies that $[\omega']=[\omega]$ in the second cohomology group $H^2((G,\circ), Z(G))$. Thus, $[\omega]$ does not depend on the choice of $S$.
\end{proof}

\begin{defi}
The cohomological class $[\omega]\in H^2((G,\circ), Z(G))$ is called the {\bf obstruction class} of the inner post-group being induced by a Rota-Baxter operator.
\end{defi}

\begin{thm}\label{omega-RB-operator}
An inner post-group $(G,\cdot,\rhd)$ is induced by a Rota-Baxter operator of weight~1 on the group $(G,\cdot)$ if and only if the obstruction class $[\omega]$ in $H^2((G,\circ), Z(G))$ is trivial.
\end{thm}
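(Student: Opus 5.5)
The plan mirrors the proof of Theorem~\ref{kap-RB-operator}. The key observation is that, for a section $S(a)=(a,\Phi(a))$, the cocycle $\omega$ of Eq.~\eqref{omega} vanishes identically exactly when $\Phi$ is a Rota-Baxter operator inducing $\rhd$.

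Indeed, since $L^{\rhd}(a)=\Ad_{\Phi(a)}$, we have
$$a\circ b=a\cdot(a\rhd b)=a\cdot \Ad_{\Phi(a)}b .$$
The condition $\omega(a,b)=e$ therefore reads
$$\Phi(a)\cdot\Phi(b)=\Phi(a\cdot\Ad_{\Phi(a)}b),$$
which is exactly Eq.~\eqref{RBO-group} for $\Phi$. Moreover, the post-group structure induced by $\Phi$ via Eq.~\eqref{rhd-huaB} is the given one, again because $a\rhd b=\Ad_{\Phi(a)}b$.

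\emph{Necessity.} Suppose $\rhd=\rhd_{\huaB}$ for a Rota-Baxter operator $\huaB$. Then $L^{\rhd}(a)=\Ad_{\huaB(a)}$, so $(a,\huaB(a))\in G^{(2)}$. Hence $S(a)=(a,\huaB(a))$ is an admissible section. Its $\omega$ is identically $e$ by Eq.~\eqref{RBO-group}, since $a\circ b=a\cdot\Ad_{\huaB(a)}b$. By Proposition~\ref{prop:InnerGroup2cocycle}, the class $[\omega]$ does not depend on the section, so it is trivial.

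\emph{Sufficiency.} Suppose $[\omega]$ is trivial for some section $\Phi$. Then there is a map $\zeta:G\to Z(G)$ with
$$\omega(a,b)=\zeta(a)\cdot\zeta(b)\cdot\zeta(a\circ b)^{-1},$$
using that the action on $Z(G)$ is trivial and $Z(G)$ is abelian. Up to the sign convention, this is the coboundary formula appearing at the end of the proof of Proposition~\ref{prop:InnerGroup2cocycle}. Define
$$\huaB(a):=\Phi(a)\cdot\zeta(a)^{-1}.$$
Because $\zeta(a)$ is central, $\Ad_{\huaB(a)}=\Ad_{\Phi(a)}=L^{\rhd}(a)$. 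So $S'(a)=(a,\huaB(a))$ is again a section, and $\huaB$ induces $\rhd$. It remains to show that $\omega'$, computed with $\Phi'=\huaB$, equals $e$:
$$\huaB(b)^{-1}\huaB(a)^{-1}\huaB(a\circ b)=\zeta(b)\zeta(a)\,\Phi(b)^{-1}\Phi(a)^{-1}\Phi(a\circ b)\,\zeta(a\circ b)^{-1}=\zeta(a)\zeta(b)\,\omega(a,b)^{-1}\ldots$$
Here centrality lets me move all the $\zeta$ factors to the front, and the relation between $\omega$ and $\zeta$ collapses the product to $e$. By the first observation, $\huaB$ is then a Rota-Baxter operator inducing $\rhd$.

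The only delicate point is matching the coboundary convention, i.e.\ the sign or inverse on $\zeta$, so that the product cancels exactly. I would fix it by following the calculation in Proposition~\ref{prop:InnerGroup2cocycle}, where
$$\omega'=\zeta(a)\,\zeta(b)\,\omega(a,b)\,\zeta(a\circ b)^{-1}$$
for $\zeta=\Phi'^{-1}\Phi$. Choosing $\zeta$ so that this expression equals $e$, and setting $\huaB=\Phi\cdot\zeta^{-1}$, gives $\omega'\equiv e$ directly from that computation.
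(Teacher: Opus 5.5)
Your proposal is correct and is essentially the paper's proof. The paper likewise observes that $\omega\equiv e$ is exactly Eq.~\eqref{RBO-group} for $\Phi$. For a coboundary $\omega(a,b)=\zeta(a)\zeta(b)\zeta(a\circ b)^{-1}$ it sets $\huaB=\Phi\zeta$ and checks $\huaB(a\circ b)=\huaB(a)\huaB(b)$ directly, and it omits necessity as obvious (you handle it correctly via the section $a\mapsto(a,\huaB(a))$).

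Your middle display has a slip: $\Phi(b)^{-1}\Phi(a)^{-1}\Phi(a\circ b)$ equals $\omega(a,b)$, not $\omega(a,b)^{-1}$. So with the convention you first wrote, $\huaB=\Phi\zeta^{-1}$ would give $\omega'=\omega^2$. Your closing fix is the right one: take $\zeta$ so that $\zeta(a)\zeta(b)\omega(a,b)\zeta(a\circ b)^{-1}=e$, i.e.\ the inverse of the paper's $\zeta$, which again gives the paper's operator.
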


\begin{proof}
If $\omega(a,b)=e$ (here e is the unit of the group $(G,\cdot)$), for all $a,b\in G$, then we have
$$
\Phi(a)\cdot \Phi(b)=\Phi(a \circ b)= \Phi(a \cdot \Ad_{\Phi(a)} b),
$$
which implies that $\Phi$ is a Rota-Baxter operator of weight $1$ on the group $(G,\cdot)$.

Suppose that the obstruction class $[\omega]$ in $H^2((G,\circ), Z(G))$ is trivial, i.\,e. $[\omega]=[e]$.
That means, there exists a map $\zeta:G \to Z(G)$ such that
\begin{eqnarray}\label{omega-zeta}
\omega(a,b)=\zeta(a)\cdot \zeta(b) \cdot \zeta(a\circ b)^{-1}, \quad \forall a,b\in G.
\end{eqnarray}
Define $\huaB: G \to G$ by $\huaB(a)=\Phi(a)\zeta(a)$ for all $a\in G$. Let us show that $\huaB$ is
a Rota-Baxter operator of weight $1$ on the group $(G,\cdot)$:
\begin{eqnarray*}
\huaB(a \circ b)&=&\Phi(a \circ b)\zeta(a \circ b)\\
&\overset{\eqref{omega},\,\eqref{omega-zeta}}{=}&  \Phi(a)\cdot \Phi(b) \cdot \omega(a,b) \cdot \omega(a,b)^{-1} \cdot \zeta(a) \cdot \zeta(b)\\
&=& \Phi(a)\cdot \zeta(a) \cdot \Phi(b)\cdot \zeta(b)\\
&=& \huaB(a) \cdot \huaB(b),
\end{eqnarray*}
since $\Imm(\zeta) \subseteq Z(G)$. The other direction of the proof is obvious, so we omit details.
\end{proof}

Recall from \cite{BGST} how to obtain a post-Lie algebra from a post-Lie group via differentiation.

\begin{defi}{\rm (\cite{BGST})}
A post-group $(G,\cdot,\rhd)$ is called a {\bf post-Lie group} if $(G,\cdot)$ is a Lie group and $\rhd$ is a smooth map.
\end{defi}	

Let $(\g,[\cdot,\cdot]_\g)$ be the Lie algebra of the Lie group $(G,\cdot)$. Denote by $\Aut(\g)$ and $\Der(\g)$ the Lie group of automorphisms and the Lie algebra of derivations on the Lie algebra $(\g,[\cdot,\cdot]_\g)$ respectively.  Let $\exp:\g\to G$ denote the exponential map. Then the relation between the Lie bracket $[\cdot,\cdot]_\g$ and the Lie group multiplication is given by the following fundamental formula:
\begin{equation}\label{eq:expo}
[u,v]_\g=\frac{d^2}{dt\,ds}\,\bigg|_{t,s=0}\exp(tu)\exp(sv)\exp(-tu),\quad \forall u,v\in\g.
\end{equation}

Since $L^\rhd(a)\in\Aut(G)$, it follows that $(L^\rhd(a))_{*e}\in \Aut(\g)$. Thus we obtain a map, still denoted by $L^\rhd$, from $G$ to $\Aut(\g)$. Then taking the differentiation, we obtain a map $L^\rhd_{*e}: \g\to \Der(\g)$. The above process can be summarized by the following diagram:
\begin{equation}\label{eq:relation}
\begin{split}
\small{
\xymatrix{G \ar[rr]^{L^\rhd}\ar[d]_{\text{differentiation}} &  &  \Aut(\g) \ar[d]^{\text{differentiation}}  \\
\g \ar[rr]^{ L^\rhd_{*e} } & &\Der(\g).}
}
\end{split}
\end{equation}
Define $\triangleright:\g\otimes \g\to \g$ by
\begin{equation}\label{eq:diff}
x\triangleright y= L^\rhd_{*e} (x)(y)=\frac{d}{dt}\bigg|_{t=0} L^\rhd(\exp(tx))(y)=\frac{d}{dt}\bigg|_{t=0}\frac{d}{ds}\bigg|_{s=0}L^\rhd(\exp(tx))\exp(sy).
\end{equation}

\begin{thm}{\rm (\cite{BGST})}\label{thm:diffpL}
Let $(G,\cdot,\rhd)$ be a post-Lie group with the smooth multiplication $\rhd$. With above notations, then  $(\g,[\cdot,\cdot]_\g,\triangleright)$ is a post-Lie algebra.
\end{thm}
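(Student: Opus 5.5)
We need to prove Theorem~\ref{thm:diffpL}: differentiating a post-Lie group gives a post-Lie algebra. The plan is to differentiate the two group-level identities twice, using Lemma~\ref{post-group-sub} to organize them.

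\emph{Identity \eqref{Post-1}.} Since each $L^\rhd(a)$ is a Lie group automorphism of $(G,\cdot)$, its differential $(L^\rhd(a))_{*e}$ lies in $\Aut(\g)$. The map $a\mapsto (L^\rhd(a))_{*e}$ is smooth because $\rhd$ is smooth, and it equals the identity at $a=e$. To see the last point, note that \eqref{post-group-2} with $a=e$ gives $(e\rhd b)\rhd c=e\rhd(b\rhd c)$. Since $L^\rhd(e)$ is bijective, we may write $c=e\rhd c'$, and we want $L^\rhd(e)=\Id$. Alternatively, the unit of $(G,\circ)$ is $e$ (as $L^\rhd(a)(e)=e$), so $L^\rhd$ being an action of $(G,\circ)$ forces $L^\rhd(e)=\Id$. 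Hence the curve $t\mapsto (L^\rhd(\exp tx))_{*e}$ passes through $\Id$ at $t=0$, and its derivative $L^\rhd_{*e}(x)$ lies in $\Der(\g)$. This is exactly \eqref{Post-1}.

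\emph{The sub-adjacent group.} Next I identify the Lie algebra of $(G,\circ)$. The group $(G,\circ)$ is a Lie group on the same manifold, with unit $e$, since $a\circ b=a\cdot(a\rhd b)$ is smooth. Its Lie bracket is computed from the second-order term of the group commutator, or equivalently from the formula $[x,y]=\frac{d^2}{dt\,ds}\big|_{0}\,\gamma(t)\circ\delta(s)\circ\gamma(t)^{-1}$. Expand $\exp(tx)\circ\exp(sy)$ to second order, using $a\rhd b$ with $\partial_a\partial_b(a\rhd b)|_{e,e}$ given by $\triangleright$ and $a\rhd e=e$. This yields the mixed second derivative of $\circ$ at $(e,e)$ as the product term of $\cdot$ plus $x\triangleright y$. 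Antisymmetrizing gives the bracket of $\mathrm{Lie}(G,\circ)$ as $[x,y]_\g+x\triangleright y-y\triangleright x$, i.e.\ the bracket $[x,y]_\triangleright$ of Lemma~\ref{post-Lie-sub}. I expect this to be the main obstacle: one must carefully track second-order Taylor terms in local coordinates, e.g.\ via the standard fact that the bracket equals the antisymmetrization of the mixed Hessian of the multiplication at $(e,e)$.

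\emph{Identity \eqref{Post-2}.} By Lemma~\ref{post-group-sub}, the map $L^\rhd:(G,\circ)\to\Aut(G,\cdot)$ is a group homomorphism. Composing with $\Aut(G)\to\Aut(\g)$ gives a Lie group homomorphism $(G,\circ)\to\Aut(\g)$, whose differential is $L^\rhd_{*e}$. A Lie group homomorphism differentiates to a Lie algebra homomorphism, so
\[
L^\rhd_{*e}([x,y]_\triangleright)=[L^\rhd_{*e}(x),L^\rhd_{*e}(y)],
\]
where the right-hand side is the commutator in $\Der(\g)$. Evaluated on $z$, this is precisely \eqref{Post-2}. Together with \eqref{Post-1}, this shows that $(\g,[\cdot,\cdot]_\g,\triangleright)$ is a post-Lie algebra.
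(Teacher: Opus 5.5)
Your proposal is correct. The paper gives no proof of its own and only quotes the result from \cite{BGST}; your argument is essentially the standard one from that source: you obtain \eqref{Post-1} by differentiating the curve $t\mapsto (L^\rhd(\exp tx))_{*e}$ in $\Aut(\g)$, identify the Lie algebra of the sub-adjacent Lie group $(G,\circ)$ with $\g_\triangleright$ via the second-order expansion of $a\circ b=a\cdot(a\rhd b)$, and then differentiate the homomorphism $L^\rhd\colon (G,\circ)\to\Aut(\g)$ to get \eqref{Post-2}.

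The only loose ends are cosmetic. Your first attempt at $L^\rhd(e)=\Id$ is left unfinished; setting $a=b=e$ in \eqref{post-group-2} gives $e\rhd c=e\rhd(e\rhd c)$, and injectivity of $L^\rhd(e)$ finishes it. Smoothness of inversion in $(G,\circ)$ is also not argued, but it follows from the standard fact that a group structure on a manifold with smooth multiplication automatically has smooth inversion.
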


Now we show that an inner post-Lie group gives rise to an inner post-Lie algebra via differentiation.

\begin{pro}\label{pro:obs-post-group}
Let $(G,\cdot,\rhd)$ be an inner post-Lie group. Then its differentiation
$(\g,[\cdot,\cdot]_\g,\triangleright)$ is an inner post-Lie algebra. Moreover, if the obstruction class $[\omega]\in H^2((G,\circ), Z(G))$ is trivial, then the obstruction class $[\kappa] \in H^2(\g_\triangleright, Z(\g))$ is also trivial.
\end{pro}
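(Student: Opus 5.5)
First I will show that the differentiated post-Lie algebra is inner. Since $(G,\cdot,\rhd)$ is inner, for each $a\in G$ there is some $b\in G$ with $L^\rhd(a)=\Ad_b$. Differentiating at $e$ gives $(L^\rhd(a))_{*e}=\Ad_b\in\Aut(\g)$, where $\Ad:G\to\Aut(\g)$ is the usual adjoint action. So the map $G\to\Aut(\g)$ in diagram \eqref{eq:relation} factors through $\Ad(G)\subseteq\Aut(\g)$.

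Next, $\Ad(G)$ is a (virtual) Lie subgroup of $\Aut(\g)$ whose Lie algebra is $\ad(\g)=\Inn(\g)$. A smooth map $L^\rhd:G\to\Aut(\g)$ that lands in this integral subgroup has its differential $L^\rhd_{*e}(\g)$ contained in the tangent space $\Inn(\g)$. I would argue this by viewing $t\mapsto L^\rhd(\exp(tx))$ as a smooth one-parameter family. It is a homomorphism from $(G,\circ)$, and $\exp(tx)$ is a curve through $e$, not a $\circ$-one-parameter subgroup, but that does not matter. The curve lies in $\Ad(G)$ and is smooth in $\Aut(\g)$. The main obstacle is that $\Ad(G)$ need not be closed, so smooth curves in it need not be smooth in the manifold topology of $\Ad(G)$.

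To avoid this obstacle I would instead use the structure of $G^{(2)}$ when available. Assuming, as is implicit, that the obstruction data are smooth, choose the section $\Phi$ to be smooth near $e$ with $\Phi(e)=e$; this is possible locally because $\Ad:G\to\Ad(G)$ is a covering onto its image with discrete kernel directions modulo $Z(G)$. Then $L^\rhd(a)=\Ad_{\Phi(a)}$, and differentiating $\Ad_{\Phi(\exp tx)}\exp(sy)$ via \eqref{eq:expo} gives
\[
x\triangleright y=[\phi(x),y]_\g,\qquad \phi:=\Phi_{*e}.
\]
This shows the post-Lie algebra is inner, and that $\phi$ is the differential of $\Phi$.

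For the second claim, suppose $[\omega]$ is trivial. By Theorem \ref{omega-RB-operator} there is a Rota-Baxter operator $\huaB$ on $(G,\cdot)$ with $a\rhd b=\Ad_{\huaB(a)}b$. Equivalently, $\huaB=\Phi\zeta$ with $\zeta$ valued in $Z(G)$, and $\huaB$ is a homomorphism $(G,\circ)\to(G,\cdot)$. I will take $\huaB$ smooth, assuming $\zeta$ is chosen smooth, or use that $\huaB$ is a continuous homomorphism between Lie groups and hence smooth. Differentiating $\huaB(a)\huaB(b)=\huaB(a\circ b)$ is the known differentiation of Rota-Baxter groups from \cite{GLS}. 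It shows that $R:=\huaB_{*e}$ is a Rota-Baxter operator of weight $1$ on $\g$, and the differential of $\circ$ gives the bracket $[\cdot,\cdot]_\triangleright$. By the computation above with $\huaB$ in place of $\Phi$, we get $x\triangleright y=[R(x),y]_\g$. Hence the inner post-Lie algebra is induced by a Rota-Baxter operator, and Theorem \ref{kap-RB-operator} gives $[\kappa]=0$.

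The hard step is the regularity needed to differentiate: getting $\Phi$ or $\huaB$ smooth at $e$. I would handle it through the automatic smoothness of continuous homomorphisms of Lie groups, applied to $\huaB:(G,\circ)\to(G,\cdot)$, which is a homomorphism of Lie groups since $\circ$ is smooth.
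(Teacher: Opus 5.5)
Your route is the paper's: Theorem~\ref{omega-RB-operator} supplies $\huaB$, \cite[Theorem 2.10]{GLS} makes $B=\huaB_{*e}$ a Rota--Baxter operator with $x\triangleright y=[B(x),y]_\g$, and Theorem~\ref{kap-RB-operator} gives $[\kappa]=0$. The paper applies \cite{GLS} as if $\huaB$ were smooth and never justifies this. You correctly identify this regularity as the crux, but the justification you propose fails, so as written there is a gap.

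\textbf{The smoothness of $\huaB$.} Automatic smoothness applies only to \emph{continuous} homomorphisms, and nothing makes $\huaB$ continuous. The operator of Theorem~\ref{omega-RB-operator} is $\Phi\zeta$, where $\Phi$ is an arbitrary set-theoretic section and $\zeta$ is any map with $\omega=\delta\zeta$ in abstract group cohomology. Moreover, by the proposition on Rota--Baxter operators inducing the same post-group, you may multiply $\huaB$ by any abstract homomorphism $(G,\circ)\to Z(G)$. For instance, for $G=\mathbb{R}$ with $a\rhd b=b$, every discontinuous additive map $\mathbb{R}\to\mathbb{R}$ is a Rota--Baxter operator inducing $\rhd$. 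So ``$\huaB$ is a continuous homomorphism, hence smooth'' assumes what must be shown, and ``choose $\zeta$ smooth'' is an extra hypothesis.

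One repair needs no regularity of $\huaB$, say for connected $G$. An abstract splitting $\sigma$ of $1\to Z(G)\to G^{(2)}\to(G,\circ)\to1$ gives $G^{(2)}=Z(G)\times\sigma(G)$, so $[G^{(2)},G^{(2)}]\subseteq\sigma(G)$ meets $Z(G)$ trivially. Now use three facts:
\begin{itemize}
\item $G^{(2)}$ is a closed subgroup of $(G,\circ)\times(G,\cdot)$ with Lie algebra $\g^{(2)}$;
\item $Z(G)$ has Lie algebra $Z(\g)$;
\item the commutator subgroup of the identity component of $G^{(2)}$ is the analytic subgroup with Lie algebra $[\g^{(2)},\g^{(2)}]$.
\end{itemize}
Together these force $[\g^{(2)},\g^{(2)}]\cap Z(\g)=0$. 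This is equivalent to the splitting of $0\to Z(\g)\to\g^{(2)}\to\g_\triangleright\to0$, i.e.\ $[\kappa]=0$.

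\textbf{Innerness.} The obstacle you raise is not real, and your workaround does not avoid it. For connected $G$, $\Ad(G)\subseteq\Aut(\g)$ is the integral subgroup with Lie algebra $\ad(\g)=\Inn(\g)$. Integral subgroups are weakly embedded: a smooth map into $\Aut(\g)$ with values in $\Ad(G)$ is smooth into $\Ad(G)$. Applied to $a\mapsto(L^\rhd(a))_{*e}$, this gives $L_\triangleright(x)\in\Inn(\g)$ directly, so your first idea already works.

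By contrast, local smooth sections of $G\to G/Z(G)$ do exist; note this map is a principal $Z(G)$-bundle, not a covering, when $Z(G)$ is not discrete. But composing with such a section yields a smooth $\Phi$ only once you know $a\mapsto L^\rhd(a)\in G/Z(G)$ is smooth. That is the same weak-embedding fact, so the detour gains nothing.
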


\begin{proof}
If $(G,\cdot,\rhd)$ is an inner post-Lie group, that means, $\Imm(L^{\rhd})\subseteq \Inn(G),$ then by diagram~\eqref{eq:relation} and Eq.~\eqref{eq:diff}, it is straightforward to check that $\Imm(L_{\triangleright}) \subseteq \Inn(\g),$ which implies that the differentiation $(\g,[\cdot,\cdot]_\g,\triangleright)$ is an inner post-Lie algebra.

If the obstruction class $[\omega]\in H^2((G,\circ), Z(G))$ is trivial, by Theorem \ref{omega-RB-operator}, there is a Rota-Baxter operator $\huaB:G \to G$ such that $a \rhd b =\Ad_{\huaB(a)}b=\huaB(a) \cdot b \cdot (\huaB(a))^{-1}$ for all $a,b\in G$. By \cite[Theorem 2.10]{GLS}, $B:=\huaB_{\ast e}: \g \to \g$ is a Rota-Baxter operator on $\g$ and we have
\begin{eqnarray*}x \triangleright y&=& \frac{d}{dt}\bigg|_{t=0}\frac{d}{ds}\bigg|_{s=0}L^\rhd(\exp(tx))\exp(sy)\\
&=&
\frac{d}{dt}\bigg|_{t=0}\frac{d}{ds}\bigg|_{s=0}  \Ad_{\huaB(\exp(tx))}  \exp(sy)\\
&=& \ad_{B(x)} y=[B(x),y]_{\g},
\end{eqnarray*}
for all $x,y \in \g$, which implies that $(\g,[\cdot,\cdot]_\g,\triangleright)$ is an inner post-Lie algebra induced by a Rota-Baxter operator $B$ on $\g$. By Theorem \ref{kap-RB-operator}, the corresponding obstruction class $[\kappa] \in H^2(\g_\triangleright, Z(\g))$ is trivial.
\end{proof}

\section{Applications of inner post-Lie algebras and inner post-Lie groups}

In this section, we give the applications of inner post-Lie algebras and inner post-Lie groups. We first consider inner post-Lie algebras and recall a result about semisimple Lie algebras.

\begin{lem}{\rm (\cite{Burde2022})} \label{lem1}
Let $\g$ be a semisimple Lie algebra decomposed as a sum $\g = \g_1 + \g_2$
of its two semisimple subalgebras $\g_1,\g_2$. Then the subalgebra $\g_1 \cap \g_2$ is either zero or semisimple.
\end{lem}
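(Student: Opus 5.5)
We will prove the statement by showing that the solvable radical of $\g_1\cap\g_2$ is zero. Here the ground field is $\mathbb{C}$ (or characteristic zero) and everything is finite-dimensional. Put $\mathfrak h=\g_1\cap\g_2$ and let $\mathfrak r=\mathrm{rad}(\mathfrak h)$. If $\mathfrak h=0$ there is nothing to prove; otherwise it suffices to show $\mathfrak r=0$, since then $\mathfrak h$ is semisimple.

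\textbf{Step 1 (dimension count).} Because $\g=\g_1+\g_2$, we have $\dim\mathfrak h=\dim\g_1+\dim\g_2-\dim\g$. We will use the Killing form $K$ of $\g$, which is nondegenerate. Each $\g_i$ is semisimple, so the restriction of $K$ to $\g_i$ is an invariant form on $\g_i$. Since $\g_i$ acts faithfully on $\g$, this restriction is nondegenerate (the trace form of a faithful representation of a semisimple algebra is nondegenerate). Let $\g_i^\perp$ be the orthogonal complement with respect to $K$. Then $\g=\g_i\oplus\g_i^\perp$, and $\g_i^\perp$ is $\g_i$-stable. From $\g=\g_1+\g_2$ we get $\g_1^\perp\cap\g_2^\perp=0$.

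\textbf{Step 2 (a Levi-type argument on $\mathfrak r$).} Consider the $\mathfrak h$-module $\g$, and note that $\mathfrak r$ acts on it. Apply Lie's theorem to the solvable algebra $\mathfrak r\subset\g_1$. Since $\g_1$ is semisimple and $\mathfrak r$ is solvable, the elements of $[\mathfrak r,\mathfrak r]$ act nilpotently on $\g$. The idea is to show that the nilradical of $\mathfrak r$, and then the toral part, must vanish; this is done in Steps 3 and 4 using Onishchik's classification-free criterion.

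\textbf{Step 3 (key step, via Onishchik).} This is the step I expect to be the main obstacle. I would use the classical fact (Onishchik) that for a decomposition $\g=\g_1+\g_2$ of a semisimple $\g$ into reductive subalgebras, the intersection $\g_1\cap\g_2$ is reductive. Concretely, pass to the compact real forms and the corresponding compact groups $G,G_1,G_2$. The condition $\g=\g_1+\g_2$ means that $G_1$ acts transitively on $G/G_2$, so $G/G_2\cong G_1/(G_1\cap G_2)$ with $G_1\cap G_2$ compact. Its Lie algebra $\mathfrak h$ is therefore compact, hence reductive. Thus $\mathfrak h$ is reductive, and $\mathfrak r=Z(\mathfrak h)$ is a toral (ad-semisimple) abelian subalgebra. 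The delicate part is justifying that complex subalgebras can be chosen compatible with a common compact real form, which I would take from Onishchik's theorem.

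\textbf{Step 4 (killing the center).} Suppose $Z(\mathfrak h)\neq 0$. Taking complements, $\g=\g_1+\g_2$ yields $\g/\g_2\cong\g_1/\mathfrak h$ as $\mathfrak h$-modules, and these are orthogonal complements with respect to $K$. We then compare $K$-traces of a central element $z\in Z(\mathfrak h)$ along the decompositions $\g=\g_1\oplus\g_1^\perp$ and $\g_1=\mathfrak h\oplus\mathfrak m$. Because $\g_1$ and $\g_2$ are semisimple and hence have no characters, $z$ would have to act with zero trace on each piece. Combining this with the fact that $\mathrm{ad}\,z$ is semisimple with real eigenvalues in the compact form forces $\mathrm{ad}\,z=0$ on $\g$. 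Then $z\in Z(\g)=0$, a contradiction. Hence $\mathfrak h$ is semisimple or zero.
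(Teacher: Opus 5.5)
The paper gives no proof of this lemma. It quotes it from Burde--Dekimpe--Monadjem, who base it on Onishchik's theory of decompositions, so citing Onishchik in Step~3 is legitimate. Be precise about what you cite, though. The complex identity $\g=\g_1+\g_2$ by itself does not make $G_1$ transitive on $G/G_2$. You need a compact real form $\mathfrak u$ of $\g$ such that each $\mathfrak u_i=\mathfrak u\cap\g_i$ is a compact real form of $\g_i$ \emph{and} $\mathfrak u=\mathfrak u_1+\mathfrak u_2$ (equivalently, $\mathfrak u\cap\mathfrak h$ is a real form of $\mathfrak h$). Granting that, Step~3 correctly shows that $\mathfrak h$ is reductive. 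Step~2 establishes nothing that is used later.

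The genuine gap is Step~4, the passage from reductive to semisimple.

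\textbf{Why the trace argument fails.} The trace information you collect is empty. Every element of the semisimple algebra $\g_1$ acts with trace zero on every $\g_1$-stable subspace, whether or not it is central in $\mathfrak h$. A nonzero semisimple endomorphism with real eigenvalues can also have trace zero, e.g.\ $\ad_h$ on $\mathfrak{sl}_2$ with eigenvalues $2,0,-2$. To force $\ad_z=0$ you would need something like $K(z,z)=0$, and nothing supplies that. (In the compact form the eigenvalues of $\ad_z$ are imaginary, not real, but that is minor.)

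\textbf{No argument of this kind can work.} Semisimplicity of $\g_2$ is essential, and your Step~4 never uses it in a substantive way. The group $U(n)\subset SO(2n)$ acts transitively on $S^{2n-1}=SO(2n)/SO(2n-1)$ with stabilizer $U(n-1)$. Complexifying gives $\mathfrak{so}(2n,\mathbb C)=\mathfrak{so}(2n-1,\mathbb C)+\mathfrak{gl}(n,\mathbb C)$ for $n\ge 2$, with intersection $\mathfrak{gl}(n-1,\mathbb C)$, which has nonzero center. Yet every trace condition in your Step~4 holds for a central $z$ there.

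\textbf{How to close the gap.} The hypothesis on $\g_2$ has to enter through the identification $G/G_2\cong G_1/H$, with $H=G_1\cap G_2$ in the compact picture, which you state but never exploit. Use $\pi_2(G)=\pi_2(G_1)=0$ in the two homotopy exact sequences.
\begin{itemize}
\item For $G/G_2$: $\pi_2(G/G_2)$ injects into $\pi_1(G_2)$, which is finite because $G_2$ is semisimple.
\item For $G_1/H$: $\pi_2(G_1/H)$ is the kernel of $\pi_1(H^0)\to\pi_1(G_1)$. Since $\pi_1(G_1)$ is finite, this kernel has finite index in $\pi_1(H^0)$, whose rank is $\dim Z(\mathfrak u\cap\mathfrak h)$.
\end{itemize}
Since the two spaces are diffeomorphic, $Z(\mathfrak u\cap\mathfrak h)=0$. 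As $\mathfrak u\cap\mathfrak h$ is a compact real form of $\mathfrak h$, it follows that $Z(\mathfrak h)=0$.
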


\begin{thm}\label{thm-2}
Let $(\g,[\cdot,\cdot]_\g, \rk)$ be a finite dimensional inner post-Lie algebra such that the obstruction class $[\kappa]$ in $H^2(\g_\triangleright, Z(\g))$ is trivial. If the sub-adjacent Lie algebra $\g_{\rk}$ is semisimple, then $\g\cong \g_{\rk}$.
\end{thm}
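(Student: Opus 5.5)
By Theorem~\ref{kap-RB-operator}, the triviality of $[\kappa]$ gives a Rota-Baxter operator $R$ of weight $1$ on $(\g,[\cdot,\cdot]_\g)$ with $x\rk y=[R(x),y]_\g$. I then plan to use the standard structure theory of weight $1$ Rota-Baxter Lie algebras. The maps $R$ and $R':=R+\Id$ are both Lie algebra homomorphisms $\g_\rk\to\g$. Indeed, \eqref{RBO-Lie-alg} says exactly $R([x,y]_\rk)=[R(x),R(y)]_\g$. Adding $[x,y]_\rk=[R(x),y]_\g+[x,R(y)]_\g+[x,y]_\g$ to both sides gives $(R+\Id)([x,y]_\rk)=[(R+\Id)(x),(R+\Id)(y)]_\g$. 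Set $\g_1=\Imm R$ and $\g_2=\Imm(R+\Id)$. These are subalgebras of $\g$, and $\g=\g_1+\g_2$ because $x=(R+\Id)(x)-R(x)$. Since $\g_\rk$ is semisimple, every homomorphic image of it is zero or semisimple, so $\g_1$ and $\g_2$ are each zero or semisimple.

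The next step is to show that $\g$ itself is semisimple. Since $\g=\g_1+\g_2$ with each summand semisimple or zero, one might hope to argue directly, but a sum of two semisimple subalgebras need not obviously be semisimple without more structure. I will use the map $\Psi:\g_\rk\to\g_1\oplus\g_2$, $x\mapsto(R(x),(R+\Id)(x))$. This is an injective homomorphism, since its second component minus its first is the identity. Its image is $\{(a,b): \theta(\bar a)=\bar b\}$, where $\theta:\g_1/\ker(R+\Id)\cdots$ is the standard Cayley isomorphism $\g_1/K_1\cong\g_2/K_2$ with $K_1=R(\ker(R+\Id))$ and $K_2=(R+\Id)(\ker R)$.

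To get semisimplicity of $\g$ I will use the radical. Let $\mathfrak r=\mathrm{rad}(\g)$ and consider the projection $\pi:\g\to\g/\mathfrak r$. The images $\pi(\g_1)$ and $\pi(\g_2)$ are semisimple and sum to the semisimple algebra $\g/\mathfrak r$. The kernels $\g_i\cap\mathfrak r$ are solvable ideals of the semisimple $\g_i$, hence zero, so $\g_i\cap\mathfrak r=0$. Then a dimension count should give $\mathfrak r=0$. Via the Rota-Baxter factorization, $\dim\g_\rk=\dim\g_1+\dim\g_2-\dim(\text{fiber product}\ldots)$; more concretely, I would compare the factorization for $\g$ with the one for $\g/\mathfrak r$, noting that $\pi\circ R$ is not a Rota-Baxter operator on $\g/\mathfrak r$ unless $R(\mathfrak r)\subseteq\mathfrak r$. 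I expect this step to be the main obstacle. The first thing I would try is to show that $\mathfrak r$ is $R$-invariant, using that $\mathfrak r$ is invariant under all derivations and so under $\ad_{R(x)}$. I would then try to get the conclusion by showing that $\mathfrak r$ is an ideal of $\g_\rk$, hence zero because it is solvable there.

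Granting that $\g$ is semisimple, the final step applies Lemma~\ref{lem1} to $\g=\g_1+\g_2$ to conclude that $\g_1\cap\g_2$ is zero or semisimple. Under $\Psi$ this intersection corresponds to the pieces $K_1\cong\ker(R+\Id)$ and $K_2\cong\ker R$. These kernels are ideals of $\g_\rk$, hence semisimple. I would then compute Levi decompositions on both sides: $\g_\rk\cong\ker R\oplus \g_1$-type splitting and $\g\cong \g_1\oplus\g_2/(\g_1\cap\g_2)$-type. Ideals of a semisimple algebra are direct factors, so I expect a matching of simple factors, with multiplicities, that yields $\g\cong\g_\rk$.
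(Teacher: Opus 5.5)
Your preliminary steps are correct. $R$ and $R+\Id$ are homomorphisms $\g_\rk\to\g$, $\g=\Imm R+\Imm(R+\Id)$, both images are zero or semisimple, and $\Psi$ is an injective homomorphism onto the Cayley fiber product. But there is a genuine gap exactly where you expected it: nothing in the proposal shows that $\g$ is semisimple, and the suggested route does not work.

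Invariance of $\mathfrak r=\mathrm{rad}(\g)$ under derivations only gives $[R(x),\mathfrak r]_\g\subseteq\mathfrak r$. That holds with any element in place of $R(x)$, since $\mathfrak r$ is an ideal, and says nothing about $R(\mathfrak r)$. Worse, take $y\in\mathfrak r$. The terms $[R(x),y]_\g$ and $[x,y]_\g$ of $[x,y]_\rk$ lie in $\mathfrak r$ automatically. The remaining term $[x,R(y)]_\g$ lies in $\mathfrak r$ for all $x$ iff the image of $R(y)$ is central in the centerless algebra $\g/\mathfrak r$, i.e.\ iff $R(y)\in\mathfrak r$. So ``$\mathfrak r$ is an ideal of $\g_\rk$'' is equivalent to the $R$-invariance you want, and the plan is circular.

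Nor can a dimension count rescue it. The information you extracted ($\g$ is a sum of two semisimple subalgebras, each meeting $\mathfrak r$ trivially) does not force $\mathfrak r=0$. In $\mathfrak{sl}_2\ltimes\mathbb C^2$, take the Levi factor $\mathfrak s$ and its conjugate $e^{\ad v}(\mathfrak s)=\{s-s\cdot v: s\in\mathfrak s\}$ with $0\neq v\in\mathbb C^2$. Both are semisimple, both meet the radical trivially, and they sum to the whole algebra.

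The paper does not derive this step from the factorization at all. It imports it as Theorem 4.6 of its Burde et al.\ reference. As indicated in the proof of Lemma~\ref{lem-group-2}, that argument is cohomological: $H^1(\g_\rk,M)=0$ for all finite-dimensional $M$ forces $H^1(\g,M)=0$, and Whitehead's characterization of semisimplicity then finishes.

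The final step is also not sound as written. Your $\g_1\cap\g_2=\Imm R\cap\Imm(R+\Id)$ is not governed by $K_1=\ker(R+\Id)$ and $K_2=\ker R$. In fact $\Imm R\cap\Imm(R+\Id)=\Imm\, R(R+\Id)$, of dimension $\dim\g-\dim\ker R-\dim\ker(R+\Id)$. For $R=0$ on a semisimple $\g$, the intersection is $0$ while $\ker R=\g$. Moreover, ``$\g\cong\g_1\oplus\g_2/(\g_1\cap\g_2)$'' has no meaning as Lie algebras, since $\g_1\cap\g_2$ need not be an ideal of $\g_2$. So there is no mechanism behind the hoped-for matching of simple factors.

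The paper proceeds differently. It builds a tower of brackets on the same space, $[x,y]_{i+1}=[R(x),y]_i+[x,R(y)]_i+[x,y]_i$ (so $\g_0=\g$ and $\g_1=\g_\rk$ in its notation), on which $R,R+\Id:\g_{i+1}\to\g_i$ are homomorphisms. The steps are:
\begin{enumerate}
\item Apply Lemma~\ref{lem1} to the semisimple $\g_0=\Imm R+\Imm(R+\Id)$ to get that $\Imm R(R+\Id)$ is zero or semisimple.
\item Observe that $\ker R(R+\Id)=\ker R\oplus\ker(R+\Id)$ is a semisimple ideal of $\g_2$.
\item Conclude that $\g_2$ is semisimple, since the homomorphism $R(R+\Id):\g_2\to\g_0$ has semisimple kernel and image.
\item Induct up to $\g_n$ with $n=\dim\g$, then quote Corollary 4.11 of the same reference to get $\g_i\cong\g_j$, in particular $\g\cong\g_\rk$.
\end{enumerate}
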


\begin{proof}	
By Theorem \ref{kap-RB-operator}, there is a \rb operator $R:\g\to\g$ such that $x\rk y=[R(x),y]_\g$ for all $x,y\in \g$. If $R$ is trivial (i.e. $R=0$ or $R=-\id$), then $\g\cong \g_{\rk}$.

Now suppose that $R$ is not trivial. Note that $R$ is also a \rb operator of weight 1 on the Lie algebra $\g_{\rk}$.
Naturally, we obtain   a new post-Lie structure by Lemma \ref{lem-RB-innPL}.
Repeating this process,  we obtain a series of Rota-Baxter Lie algebras  $\{\g_i\}_{i\geq 0}$ on the same vector space $\g$ which are defined by
\begin{align*}
[x,y]_0 &= [x,y]_\g,\\
[x,y]_{i+1} &= [R(x),y]_i + [x,R(y)]_i+[x,y]_i, \quad \forall x,y\in \g,
\end{align*}
and a series of inner post-Lie algebras $\{(\g_i,[\cdot,\cdot]_i,\rk_i)\}$ which is given by $x\rk_i y=[R(x),y]_i$ for all $\quad i\geq 0$.
We observe that $R,\, R+\id:\mathfrak{g}_{i+1} \to \mathfrak{g}_i$ are Lie algebra homomorphisms for $i\geq 0$.
Hence we obtain a composition of homomorphisms
\[
\mathfrak{g}_i \xrightarrow[R+\id]{R} \mathfrak{g}_{i-1}
\xrightarrow[R+\id]{R} \cdots  \xrightarrow[R+\id]{R} \mathfrak{g}_{0}.
\]
So $\ker(R^k)$ and $\ker((R+\id)^k)$ are
ideals in $\mathfrak{g}_j$ for all $1\leq k\leq j$.

Next we show that $\g_2$ is also a semisimple Lie algebra. By \cite[Theorem 4.6]{Burde2020-1}, if $\g_1$ is semisimple, then $\g_0$ is semisimple too.  Fix $R,R+\id: \g_1\to \g_0$. We have a decomposition
\begin{align*}\label{Sum}
\g_0= \Imm(R) + \Imm(R+\id),
\end{align*}
since $x = R(-x) + (R+\id)(x)$ for every $x\in \g_0$.  Hence, both $\Imm(R)$ and $\Imm(R+\id)$
are semisimple as nonzero homomorphic images of the semisimple Lie algebra~$\mathfrak{g}_1$. Then by Lemma \ref{lem1}, $\Imm(R) \cap \Imm(R+\id)$ is either zero or semisimple. We claim that $R(\mathfrak{g}_1)\cap (R+\id)(\mathfrak{g}_1)= R(R+\id)(\mathfrak{g}_2)$. On the one hand,
for all $x\in R(\mathfrak{g}_1)\cap (R+\id)(\mathfrak{g}_1)$, we choose $x_1,x_2\in \g$ such that $x=R(x_1)=(R+\id)(x_2)$. Hence, $R(x_1-x_2)=x_2$ which means that $$x=(R+\id)R(x_1-x_2)=R(R+\id)(x_1-x_2)\in \Imm R(R+\id).$$ Therefore, $R(\mathfrak{g}_1)\cap (R+\id)(\mathfrak{g}_1)\subseteq R(R+\id)(\mathfrak{g}_2)$. On the other hand, since $R(R+\id)=(R+\id)R$, we have $R(\mathfrak{g}_1)\cap (R+\id)(\mathfrak{g}_1)\supseteq R(R+\id)(\mathfrak{g}_2)$. Therefore, $R(R+\id)(\mathfrak{g}_2)$ is semisimple or zero.

Note that both $\ker(R)$ and $\ker(R+\id)$ have the same product in all $\mathfrak{g}_i$ respectively and they are not intersecting ideals in $\mathfrak{g}_j$ for $j\geq1$.
So, $\ker(R)\oplus\ker(R+\id)$ as an ideal in the semisimple algebra $\mathfrak{g}_1$ is itself semisimple.
Then $\ker(R)\oplus\ker(R+\id)$ is a~semisimple ideal in $\mathfrak{g}_2$.
Note that $\ker R(R+\id)=\ker(R)\oplus\ker(R+\id)$ by inclusion relationship.
Therefore, $\ker(R(R+\id))$ is a~semisimple ideal in $\mathfrak{g}_2$.
If $\ker(R(R+\id)) = \mathfrak{g}_2$, then $\mathfrak{g}_2$ is semisimple. Suppose that $\ker(R(R+\id)) \neq \mathfrak{g}_2$.
In summary, for the homomorphism $R(R+\id)\colon \mathfrak{g}_2\to\g_0$, both its kernel and image are semisimple. It means that $\mathfrak{g}_2$ is semisimple since $\g_2 \big/ \ker R(R+\id)\cong \Imm R(R+\id)$. Continuing on by induction, we obtain $\mathfrak{g}_n$ is semisimple, where $n = \dim(V)$. Then by \cite[Corollary 4.11]{Burde2020-1}, all $\mathfrak{g}_i$ are pairwise isomorphic. The proof is finished.
\end{proof}

Now we consider inner post-Lie groups and obtain a similar result. Here we assume that all Lie groups are simply connected, in order to establish the relationship with their corresponding Lie algebras.

\begin{lem}\label{lem-group-1}
Let $G$ be a simply connected semisimple Lie group. Suppose that $G=G_1\cdot G_2$ as a product of its two connected semisimple Lie subgroups $G_1$ and $G_2$. Then the Lie subgroup $G_1 \cap G_2$ is either a discrete subgroup or a closed semisimple Lie subgroup. More precisely, the identity component of  $G_1 \cap G_2$ is trivial or a connected semisimple Lie group.
\end{lem}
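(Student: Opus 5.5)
The plan is to reduce Lemma~\ref{lem-group-1} to its Lie algebra counterpart, Lemma~\ref{lem1}, through the Lie correspondence. Let $\g,\g_1,\g_2$ denote the Lie algebras of $G,G_1,G_2$. The intersection $H:=G_1\cap G_2$ is a subgroup of $G$. It is in fact a Lie subgroup: for immersed subgroups this follows from the standard fact that the intersection of two Lie subgroups is a Lie subgroup whose Lie algebra is $\g_1\cap\g_2$. Its identity component $H_0$ is therefore the connected Lie subgroup of $G$ with Lie algebra $\g_1\cap\g_2$. It then suffices to show that $\g_1\cap\g_2$ is either zero or semisimple. In the first case $H$ is discrete, and in the second case $H_0$ is a connected semisimple Lie group.

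The first step is to pass from $G=G_1\cdot G_2$ to $\g=\g_1+\g_2$. Consider the smooth surjective map $m\colon G_1\times G_2\to G$, $(g_1,g_2)\mapsto g_1g_2$. Since $G_1,G_2$ are second countable, Sard's theorem gives a point $(g_1,g_2)$ at which $dm$ is surjective. The differential at $(g_1,g_2)$, after left translation by $g_1^{-1}$ and right translation by $g_2^{-1}$, has image $\g_1+\Ad_{g_1}\cdot$ wait—more precisely it has image $\g_1+\Ad_{g_1}^{-1}$-free form $\g_1+\g_2'$ for a conjugate; concretely, $T_{g_1g_2}(g_1G_2)$ translated back to $e$ gives $\g_1+\Ad_{g_1}(\g_2)$. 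This equals $\g$, so $\Ad_{g_1^{-1}}(\g_1+\Ad_{g_1}\g_2)=\g_1+\g_2=\g$, using $\Ad_{g_1^{-1}}\g_1=\g_1$ because $g_1\in G_1$.

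The second step checks the hypotheses of Lemma~\ref{lem1}. The Lie algebra $\g$ is semisimple, and so are $\g_1$ and $\g_2$, since $G_1,G_2$ are semisimple connected Lie groups. Lemma~\ref{lem1} then gives that $\g_1\cap\g_2$ is zero or semisimple.

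The third step concludes. If $\g_1\cap\g_2=0$, then $H_0$ is trivial and $H$ is discrete in the Lie subgroup topology. Otherwise $H_0$ is a connected Lie group with semisimple Lie algebra. For closedness, if the $G_i$ are closed, then $H$ is closed. In general, $H_0$ is closed because connected semisimple subgroups of a Lie group are closed: their Lie algebras are their own derived algebras, and one can invoke the theorem that a connected subgroup whose Lie algebra is perfect and ad-semisimple-algebraic is closed. This closedness claim is where I expect the main obstacle, and I would simply cite Mostow's closedness result for semisimple subgroups (or restrict to closed $G_i$). Identifying $\mathrm{Lie}(G_1\cap G_2)=\g_1\cap\g_2$ for non-closed immersed subgroups also needs care; the cleanest route is to use the subgroup $\{g: \text{smooth curves in }G_1\cap G_2\}$ together with Yamabe's theorem.
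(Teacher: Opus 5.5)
Your route is the paper's: identify the Lie algebra of $G_1\cap G_2$ with $\g_1\cap\g_2$ and invoke Lemma~\ref{lem1}. Your Sard argument actually supplies a step the paper's proof skips, namely that $G=G_1\cdot G_2$ forces $\g=\g_1+\g_2$, which is a hypothesis of Lemma~\ref{lem1}. That step is correct once the computation is tidied up: the image of $dm_{(g_1,g_2)}$, right-translated to $e$, is $\g_1+\Ad_{g_1}\g_2$, and $\Ad_{g_1^{-1}}$ carries this onto $\g_1+\g_2$.

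The step that fails as written is closedness, which the paper also merely asserts. Your claim that connected semisimple subgroups of a Lie group are closed is false in general. Take $\widetilde{S}$ the universal cover of $\mathrm{SL}_2(\mathbb{R})$, $z$ a generator of its center, and $\alpha\notin\mathbb{Q}$. Then the image of $\widetilde{S}$ in $(\widetilde{S}\times\mathbb{R})/\{(z^n,n\alpha+m): n,m\in\mathbb{Z}\}$ is dense, so closedness results of the type you want to cite need extra hypotheses. Without closedness you only get discreteness in the Lie-subgroup topology, which is weaker than what the lemma asserts.

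Here the semisimplicity of the ambient group rescues the argument. Let $H$ be the analytic subgroup of a semisimple subalgebra $\mathfrak{h}\subseteq\g$. Then $\ad(\mathfrak{h})\subseteq\gl(\g)$ is perfect, hence algebraic (Chevalley). So $\Ad(H)$ is closed in $\mathrm{GL}(\g)$, and $HZ(G)=\Ad^{-1}(\Ad(H))$ is closed in $G$. Since $Z(G)$ is discrete, hence countable, $HZ(G)$ is a countable union of translates of the $(\dim\mathfrak{h})$-dimensional immersed submanifold $H$. By a measure-zero argument its Lie algebra is therefore $\mathfrak{h}$, its identity component is $H$, and $H$ is closed. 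Applied to $G_1$ and $G_2$, this shows that $G_1\cap G_2$ is a closed subgroup with Lie algebra $\g_1\cap\g_2$, so Yamabe's theorem is not needed. Both alternatives of the lemma then hold in the topology of $G$, and simple connectedness of $G$ is never used.
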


\begin{proof}
Let $\g,\g_1,\g_2$ be the corresponding Lie algebras of Lie groups $G,G_1,G_2$. Since $G$ is simply connected and semisimple, $\g$ is semisimple. Since $G_1,G_2$ are connected and semisimple, $\g_1,\g_2$ are semisimple as Lie subalgebras. Denote by $H=G_1 \cap G_2$, $H$ is a closed subgroup of $G$, whose corresponding Lie algebra is exactly $\g_1 \cap \g_2$. By Lemma \ref{lem1}, if $\g_1 \cap \g_2=0$, then $H$ is a discrete subgroup. Otherwise, $H$ is a connected semisimple Lie subgroup.
\end{proof}

\begin{lem}\label{lem-group-2}
Let $(G,\cdot,\rhd)$ be an inner post-Lie group such that the obstruction class $[\omega]\in H^2((G,\circ), Z(G))$ is trivial. If the sub-adjacent Lie group $(G,\circ)$ is simply connected and semisimple, then $(G,\cdot)$ is also semisimple.
\end{lem}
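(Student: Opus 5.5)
Our plan is to reduce the statement to its Lie algebra counterpart, Theorem~\ref{thm-2}, by differentiating. First, since $[\omega]$ is trivial, Theorem~\ref{omega-RB-operator} gives a Rota-Baxter operator $\huaB$ on $(G,\cdot)$ with $a\rhd b=\Ad_{\huaB(a)}b$. We then use the standard facts from \cite{GLS}. The map $\huaB:(G,\circ)\to(G,\cdot)$ is a homomorphism, because $\huaB(a\circ b)=\huaB(a)\huaB(b)$, and $(G,\circ)$ is again a Lie group structure on the same manifold. Its differential $B=\huaB_{*e}$ is a Rota-Baxter operator of weight $1$ on $\g$. Moreover, the Lie algebra of $(G,\circ)$ is the descendent Lie algebra $\g_\triangleright$ with bracket $[x,y]_B=[B(x),y]_\g+[x,B(y)]_\g+[x,y]_\g$, which is exactly the sub-adjacent Lie algebra of the differentiated post-Lie algebra of Proposition~\ref{pro:obs-post-group}.

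Second, we pass to Lie algebras. Because $(G,\circ)$ is semisimple, its Lie algebra $\g_\triangleright$ is semisimple. By Proposition~\ref{pro:obs-post-group}, the differentiated post-Lie algebra $(\g,[\cdot,\cdot]_\g,\triangleright)$ is inner, its obstruction class $[\kappa]$ is trivial, and it is induced by $B$. Theorem~\ref{thm-2} then gives $\g\cong\g_\triangleright$ as Lie algebras. In particular $\g$, the Lie algebra of $(G,\cdot)$, is semisimple.

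Third, we return to the group. A connected Lie group is semisimple exactly when its Lie algebra is semisimple. The group $(G,\cdot)$ is connected because it has the same underlying manifold as $(G,\circ)$, which is simply connected. It is in fact simply connected for the same reason. Hence $(G,\cdot)$ is semisimple, and since $\g\cong\g_\triangleright$ and both groups are simply connected, Lie's third theorem also gives $(G,\cdot)\cong(G,\circ)$.

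The main obstacle is the identification in the first step: the Lie algebra of $(G,\circ)$ must coincide with $\g_\triangleright$, including the smoothness of $\circ$ as a group law. We would try to get this first from the result of \cite{BGST} that the differentiation of a post-Lie group has sub-adjacent Lie algebra equal to the Lie algebra of the sub-adjacent group. If that citation is not available in this form, we would verify it directly. Since $a\circ b=a\,\huaB(a)\,b\,\huaB(a)^{-1}$ is smooth, we can compute the second-order term of $\exp(tx)\circ\exp(sy)$ using Eq.~\eqref{eq:expo}, and this produces $[x,y]_\g+[B(x),y]_\g-[B(y),x]_\g$.
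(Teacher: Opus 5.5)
Your proof is correct. It uses the same Lie-algebraic input as the paper: for a Rota--Baxter-induced post-Lie algebra, semisimplicity of $\g_\triangleright$ forces semisimplicity of $\g$ (\cite[Theorem 4.6]{Burde2020-1}, the first step in the proof of Theorem~\ref{thm-2}). The difference is how you pass between group and algebra.

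\textbf{The paper's route.} It argues cohomologically:
\begin{enumerate}
\item a simply connected Lie group is semisimple iff $H^1_{\mathrm{smooth}}(G,M)=0$ for all finite-dimensional smooth modules $M$;
\item by simple connectivity this group equals $H^1(\g,M)$;
\item the proof of \cite[Theorem 4.6]{Burde2020-1} transfers the vanishing from $\g_\triangleright$ to $\g$.
\end{enumerate}
The last step needs some care, since a $(G,\cdot)$-module is not a priori a $(G,\circ)$-module.

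\textbf{Your route.} You use the definition of a semisimple Lie group directly. This is shorter and only needs connectedness of the common underlying manifold. It also makes explicit the identification $\mathrm{Lie}(G,\circ)=\g_\triangleright$, which the paper uses tacitly.

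Two smaller points. Citing all of Theorem~\ref{thm-2} is more than you need; its first step suffices. Your bonus isomorphism $(G,\cdot)\cong(G,\circ)$ comes from integrating $\g\cong\g_\triangleright$ between simply connected groups (Lie's second theorem), not from Lie's third theorem.

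\textbf{A caveat shared with the paper.} Proposition~\ref{pro:obs-post-group} tacitly needs the operator $\huaB=\Phi\zeta$ from Theorem~\ref{omega-RB-operator} to be differentiable. But $\zeta$ is only a set-theoretic trivialization of an abstract group cocycle, so your claim that $B=\huaB_{*e}$ exists is not justified as it stands. You can avoid $\huaB$ altogether.
\begin{itemize}
\item Get the bracket of $(G,\circ)$ from $\rhd$ directly. Each $L^\rhd(\exp(tx))$ is a Lie group automorphism, so $\exp(tx)\circ\exp(sy)=\exp(tx)\exp\bigl(s\,(L^\rhd(\exp(tx)))_{*e}\,y\bigr)$. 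In exponential coordinates of $(G,\cdot)$ its mixed term is $ts\bigl(x\triangleright y+\tfrac12[x,y]_\g\bigr)$, whose antisymmetrization is $[x,y]_\triangleright$.
\item Use only the innerness half of Proposition~\ref{pro:obs-post-group}. Then $[\kappa]$ vanishes automatically, because $H^2(\g_\triangleright,Z(\g))=0$ by Whitehead's second lemma once $\g_\triangleright$ is semisimple. Theorem~\ref{kap-RB-operator} then supplies a Rota--Baxter operator on $\g$, and the hypothesis on $[\omega]$ is not even used.
\end{itemize}
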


\begin{proof}
If $G$ is a simply connected Lie group, $G$ is semisimple if and only if $H_{\bf smooth}^1(G,M)=0$ for any finite dimensional smooth $G$-module $M$.
It is the first Whitehead Lemma applied to the version of Lie groups.
Since $(G,\circ)$ is simply connected and semisimple, we have $H_{\bf smooth}^1((G,\circ),M)=0$ for any finite dimensional smooth $G$-module $M$. By Proposition \ref{pro:obs-post-group}, since $(G,\cdot,\rhd)$ is an inner post-Lie group such that the obstruction class $[\omega]\in H^2((G,\circ), Z(G))$ is trivial, its differentiation is the inner post-Lie algebra $(\g,[\cdot,\cdot]_\g,\triangleright)$ such that the obstruction class $[\kappa] \in H^2(\g_\triangleright, Z(\g))$ is trivial.
Denote by $\g_0,\g_1$ the Lie algebras $(\g,[\cdot,\cdot]_\g)$ and $(\g_\rk,[\cdot,\cdot]_{\triangleright})$ respectively.
Since $(G,\circ)$ and $(G,\cdot)$ are simply connected, we have $H_{\bf smooth}^1((G,\cdot),M)=H^1(\g_0,M)$ and $H_{\bf smooth}^1((G,\circ),M)=H^1(\g_1,M)=0$.
According to the proof of \cite[Theorem 4.6]{Burde2020-1}, we have $H^1(\g_0,M)=0$ from $H^1(\g_1,M)=0$, which implies that $H_{\bf smooth}^1((G,\cdot),M)=0$. That means, if $(G,\circ)$ is semisimple, then $(G,\cdot)$ is also semisimple.
\end{proof}

Let $(G,\cdot,\rhd)$ be an inner post-Lie group such that the obstruction class $[\omega]\in H^2((G,\circ), Z(G))$ is trivial. By Theorem \ref{omega-RB-operator}, there is a Rota-Baxter operator $\huaB:G \to G$ such that
$a \rhd b =\Ad_{\huaB(a)}b=\huaB(a) \cdot b \cdot (\huaB(a))^{-1}$ for all $a,b\in G$.

If $\huaB$ is trivial, i.\,e. $\huaB(a)=e$ or $\huaB(a)=a^{-1}$ for all $a\in G$, it is easy to check that $(G,\circ)= (G,\cdot)$ or $(G,\circ)= (G,\cdot^{\bf op})$.

Now suppose that $\huaB$ is nontrivial. Note that $\huaB$ is also a Rota-Baxter operator of weight $1$ on the sub-adjacent Lie group $(G,\circ)$ by \cite[Proposition 2.14]{GLS}. Thus, we obtain a Rota-Baxter Lie group $(G,\circ,R)$, which can induce a new post-Lie group by Lemma \ref{lem:RB-group-post-group}. Repeating this process, we obtain a series of Rota-Baxter Lie groups $\{(G_i,\circ_i)\}_{i\geq 0}$ on the same manifold $G$ defined by
\begin{eqnarray*}
a \circ_0 b&:=& a \cdot b;\\
a \circ_{i+1} b&:=& a \circ_i (\Ad_{\huaB(a)}^{\circ_i} b) = a \circ_i (\huaB(a)\circ_i  b \circ_i \overline{\huaB(a)}^{i}), \quad \forall a,b\in G,
\end{eqnarray*}
where $\overline{\huaB(a)}^{i}$ is the inverse of $\huaB(a)$ in the group $(G_i,\circ_i)$. By Lemma \ref{lem:RB-group-post-group}, we also have a series of inner post-Lie groups $\{(G_i,\circ_i,\rhd_i)\}_{i\geq 0}$, where $\rhd_i $ is given by $a \rhd_i b=\Ad_{\huaB(a)}^{\circ_i} b=\huaB(a)\circ_i  b \circ_i \overline{\huaB(a)}^{i}$ for all $a,b\in G$. According to \cite[Proposition 3.1]{GLS}, by induction, it is straightforward to check that $\huaB,\widetilde{\huaB}:(G_i,\circ_i) \to  (G_{i-1},\circ_{i-1})$ are homomorphisms of Lie groups for all $i\geq 1$, where $\widetilde{\huaB}:(G_i,\circ_i) \to  (G_{i-1},\circ_{i-1})$ is given by $\widetilde{\huaB} (a)=a \cdot \huaB(a)$ for all $a\in G$.
Hence we obtain a~composition of homomorphisms of Lie groups:
\[
G_i \xrightarrow[\widetilde{\huaB}]{\huaB} G_{i-1}
\xrightarrow[\widetilde{\huaB}]{\huaB} \cdots  \xrightarrow[\widetilde{\huaB}]{\huaB} G_{0}.
\]

\begin{thm}\label{thm-3}
With above notations, let $(G,\cdot,\rhd)$ be an inner post-Lie group, where $\rhd$ is induced by the above non-trivial Rota-Baxter operator $\huaB$ of weight $1$. Assume that $\huaB$ and $\widetilde{\huaB}$, as homomorphisms of Lie groups, have differentials $\dM\huaB$ and $\dM\widetilde{\huaB}$ that are not injective.
If the sub-adjacent Lie group $(G,\circ)$ is simply connected and semisimple, then all Lie groups $(G_n,\circ_n)$ are semisimple for all $n\geq 0$.
\end{thm}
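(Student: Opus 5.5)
The plan is to mirror the proof of Theorem~\ref{thm-2} at the group level, passing to Lie algebras wherever possible. Since all groups here are simply connected, $(G_n,\circ_n)$ is semisimple exactly when its Lie algebra is semisimple. So it suffices to show that the Lie algebras $\g_n$ of $(G_n,\circ_n)$ are all semisimple.

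\textbf{Step 1: identify the Lie algebras.} By Proposition~\ref{pro:obs-post-group} and \cite[Theorem 2.10]{GLS}, the differential $B:=\dM\huaB=\huaB_{*e}$ is a Rota-Baxter operator of weight $1$ on $\g_0=\g$. The differential of $\widetilde{\huaB}(a)=a\cdot\huaB(a)$ is $B+\id$. Differentiating $a\circ_{i+1}b=a\circ_i\Ad^{\circ_i}_{\huaB(a)}b$ shows, by induction, that the Lie algebra of $(G_{i+1},\circ_{i+1})$ is $\g_{i+1}$ with bracket
\[
[x,y]_{i+1}=[B(x),y]_i+[x,B(y)]_i+[x,y]_i .
\]
This is exactly the sequence of Theorem~\ref{thm-2}, built from $R=B$. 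Differentiating the group homomorphisms $\huaB,\widetilde{\huaB}\colon G_i\to G_{i-1}$ gives the Lie algebra homomorphisms $B,\,B+\id\colon\g_i\to\g_{i-1}$.

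\textbf{Step 2: semisimplicity of $\g_0$ and $\g_1$.} By Lemma~\ref{lem-group-2}, $(G,\cdot)=G_0$ is semisimple, since $G_1=(G,\circ)$ is simply connected and semisimple. Hence both $\g_0$ and $\g_1$ are semisimple.

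\textbf{Step 3: induction.} Now run the induction from the proof of Theorem~\ref{thm-2} with $R=B$, which is a nontrivial Rota-Baxter operator.
\begin{itemize}
\item We have $\g_0=\Imm B+\Imm(B+\id)$.
\item The hypothesis that $\dM\huaB$ and $\dM\widetilde{\huaB}$ are not injective makes $\ker B$ and $\ker(B+\id)$ nonzero. This should ensure that $B$ and $B+\id$ are not both isomorphisms, so the argument does not collapse to a trivial case.
\item Both images are nonzero homomorphic images of the semisimple $\g_1$, hence semisimple. By Lemma~\ref{lem1}, their intersection $B(B+\id)(\g_2)$ is zero or semisimple.
\item The kernel $\ker B(B+\id)=\ker B\oplus\ker(B+\id)$ is an ideal in $\g_1$ and in $\g_2$, with the same bracket in both. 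As an ideal of the semisimple $\g_1$, it is semisimple.
\item Then $\g_2$ is an extension of a semisimple algebra by a semisimple ideal, hence semisimple.
\end{itemize}
Iterating with the compositions $B^k(B+\id)^k$, and using \cite[Corollary 4.11]{Burde2020-1} once $n=\dim\g$ is reached, gives that every $\g_n$ is semisimple. Indeed all $\g_n$ are then pairwise isomorphic to $\g_1$.

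\textbf{Step 4: lifting back to groups.} Each $G_n$ has semisimple Lie algebra $\g_n$, so $G_n$ is semisimple. Alternatively, at the group level, Lemma~\ref{lem-group-1} applied to $G_0=\huaB(G_1)\cdot\widetilde{\huaB}(G_1)$ gives the analogous statement. Here the factorisation follows from $a=\widetilde{\huaB}(a)\cdot\huaB(a)^{-1}$, using that $\huaB(a)^{-1}=\huaB(\bar a)$ lies in the image of $\huaB$.

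\textbf{Main obstacle.} I expect the main obstacle to be Step~1, namely the careful verification that the differentiated brackets on the fixed manifold $G$ coincide with the iterated Rota-Baxter brackets. This needs the identification of each tangent space with $\g$, and uses that $\huaB$ remains a Rota-Baxter operator on each $(G_i,\circ_i)$ by \cite[Proposition 2.14]{GLS}. I would first try to prove by induction that $\huaB_{*e}$ computed in $G_i$ is still $B$, and then apply Eq.~\eqref{eq:expo} in each $G_i$.
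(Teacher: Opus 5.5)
Your argument is correct, but it takes a different route from the paper's.

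\textbf{The paper's route.} The paper works entirely at the group level:
\begin{itemize}
\item it factors $G_0=\Imm(\widetilde{\huaB})\cdot\Imm(\huaB)$ and applies the group version of Onishchik's lemma (Lemma~\ref{lem-group-1});
\item it uses \cite[Lemma 1]{GG} to identify $\widetilde{\huaB}(G_1)\cap\huaB(G_1)=\huaB\widetilde{\huaB}(G_2)$, and \cite[Lemma 5c]{RBGroups} to split $\ker(\huaB\widetilde{\huaB})=\ker(\huaB)\times\ker(\widetilde{\huaB})$;
\item it reads off semisimplicity of $G_2$ from $G_2/\ker(\huaB\widetilde{\huaB})\cong\Imm(\huaB\widetilde{\huaB})$.
\end{itemize}
The non-injectivity of $\dM\huaB$ and $\dM\widetilde{\huaB}$ is invoked there only to make the kernels non-discrete.

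\textbf{Your route.} You differentiate once and then rerun the linear-algebra argument of Theorem~\ref{thm-2}, using Lemma~\ref{lem1} itself. The differentiation step is routine rather than the main obstacle: $\huaB_{*e}=B$ does not depend on which group law sits on the manifold. Hence the identification of the Lie algebra of $G_{i+1}$ with the $B$-descendent of that of $G_i$ follows level by level from the case $i=0$.

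\textbf{What each route buys.} Yours never meets the topological issues of the group-level proof: connectedness of kernels, whether $\widetilde{\huaB}(G_1)\cap\huaB(G_1)$ is discrete, and closedness of images. It also makes the non-injectivity hypothesis superfluous, since a zero kernel or a zero intersection $\Imm B\cap\Imm(B+\id)$ is harmless. So your bullet appealing to that hypothesis can be dropped. Its stated rationale is also backwards: invertibility of $B$ or $B+\id$ would only make the step easier. The paper's version, in exchange, keeps the argument phrased in terms of the Rota--Baxter group identities themselves.

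\textbf{Tidying the induction.} The simplest form is to apply the same one-step argument to each consecutive triple $(\g_{m-1},\g_m,\g_{m+1})$ with the same $B$. Then neither the compositions $B^k(B+\id)^k$ nor \cite[Corollary 4.11]{Burde2020-1} is needed. If you did use $B^k(B+\id)^k$ with $k\geq 2$, note that $\ker B^k$ no longer carries literally the same bracket in every $\g_i$. It carries only an isomorphic one, via $B+\id$, which is invertible on $\ker B^k$.
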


\begin{proof}
By Lemma \ref{lem-group-2}, if $(G,\circ)$ is simply connected and semisimple, then $(G,\cdot)$ is also semisimple. Fix $\huaB,\widetilde{\huaB}: (G,\circ) \to (G,\cdot)$. We have a decomposition
$$
G_0=(G,\cdot) \cong \Imm(\widetilde{\huaB}) \cdot \Imm(\huaB),
$$
since we have $a=a \cdot \huaB(a) \cdot \huaB(a)^{-1}\in \Imm(\widetilde{\huaB}) \cdot \Imm(\huaB)$ for all $a \in G$. Hence, both $\Imm(\widetilde{\huaB})$ and $\Imm(\huaB)$ are connected semisimple Lie groups as nonzero homomorphic images of the semisimple Lie group $(G,\circ)$. Then by Lemma \ref{lem-group-1}, $\Imm(\widetilde{\huaB}) \cap \Imm(\huaB)$ is either a discrete subgroup or a closed semisimple Lie subgroup. However, by \cite[Lemma 1]{GG}, there holds $\widetilde{\huaB}(G_1) \cap \huaB(G_1)=\huaB \widetilde{\huaB} (G_2)$. Therefore, $\widetilde{\huaB}(G_1) \cap \huaB(G_1)$ is not a discrete subgroup. Consequently, $\huaB \widetilde{\huaB} (G_2)$ is semisimple.

Note that $\ker(\huaB)$ and $\ker(\widetilde{\huaB})$ have the same product in all $G_i$ respectively.
It is easy to check that
$\ker(\huaB \widetilde{\huaB})=\ker(\huaB) \times \ker(\widetilde{\huaB})$
as a direct product of $\ker(\huaB)$ and $\ker(\widetilde{\huaB})$ because of
$\huaB \widetilde{\huaB} = \widetilde{\huaB} \huaB$ (\cite[Lemma~5c]{RBGroups}).
Since $\huaB$ and $\widetilde{\huaB}$, as homomorphisms of Lie groups, have differentials $\dM\huaB$ and $\dM\widetilde{\huaB}$ that are not injective,  $\ker(\huaB)$ and $\ker(\widetilde{\huaB})$ are not discrete subgroups.
Thus, $\ker(\huaB)$ and $\ker(\widetilde{\huaB})$ are connected closed semigroup Lie subgroups, which imply that $\ker(\huaB \widetilde{\huaB})$ is also semisimple.

In conclusion, for the Lie group homomorphism $\huaB \widetilde{\huaB}: (G_2,\circ_2) \to (G_0,\cdot)$, both its kernel and image are semisimple.
Since $G_2/\ker(\huaB \widetilde{\huaB}) \cong \Imm(\huaB \widetilde{\huaB})$,
we obtain that $(G_2,\circ_2)$ is a semisimple Lie group.
Continuing on by induction, we obtain that all Lie groups $(G_n,\circ_n)$ are semisimple for all $n\geq 0$.
The proof is finished.
\end{proof}

\end{document}